\providecommand{\U}[1]{\protect\rule{.1in}{.1in}}
\newtheorem{theorem}{Theorem}
\newtheorem{corollary}[theorem]{Corollary}
\newtheorem{example}[theorem]{Example}
\newtheorem{lemma}[theorem]{Lemma}
\newtheorem{proposition}[theorem]{Proposition}
\newtheorem{remark}[theorem]{Remark}
\numberwithin{equation}{section}
\begin{document}

\title[Complete self--shrinkers into some regions of the space]{Complete self--shrinkers confined into some regions of the space}

\begin{abstract}
We study geometric properties of complete non--compact bounded self--shrinkers and obtain natural restrictions that force these hypersurfaces to be compact. Furthermore, we observe that, to a certain extent, complete self--shrinkers intersect transversally a hyperplane through the origin. When such an intersection is compact, we deduce spectral information on the natural drifted Laplacian associated to the self--shrinker. These results go in the direction of verifying the validity of a conjecture by H. D. Cao concerning the polynomial volume growth of complete self--shrinkers. A finite strong maximum principle in case the self--shrinker is confined into a cylindrical product is also presented.
\end{abstract}

\date{\today}

\author {Stefano Pigola}
\address{Dipartimento di Scienza e Alta Tecnologia\\
Universit\`a degli Studi dell'Insubria\\
via Valleggio 11\\
I-22100 Como, ITALY}
\email{stefano.pigola@uninsubria.it}

\author{Michele Rimoldi}
\address{Dipartimento di Scienza e Alta Tecnologia\\
Universit\`a degli Studi dell'Insubria\\
via Valleggio 11\\
I-22100 Como, ITALY}
\email{michele.rimoldi@gmail.com}

\subjclass[2010]{53C21}

\keywords{Bounded self--shrinkers, hyperplane intersection, weighted manifolds, drifted Laplacian}

\maketitle

\tableofcontents

\section*{Introduction}

By a self shrinker \textquotedblleft based at\textquotedblright\ $x_{0}%
\in\mathbb{R}^{m+1}$ we mean a connected, isometrically immersed hypersurface
$x\colon \Sigma^{m}\rightarrow\mathbb{R}^{m+1}$ whose mean curvature vector field
$\mathbf{H}$ satisfies the equation%
\[
\left(  x-x_{0}\right)  ^{\bot}=-\mathbf{H},
\]
where $\left(  \cdot\right)  ^{\bot}$ denotes the projection on the normal bundle of $\Sigma$.
Note that we are using the convention
\[
\mathbf{H}=\mathrm{tr}_{\Sigma}\mathbf{A},
\]
where the second fundamental form of the immersion is defined as the generalized Hessian
\[
\mathbf{A}=Ddx.
\]
With this convention, if $\Sigma$ is oriented by the outer unit normal $\nu$ and we let
\[
\mathbf{H}=H \nu,
\]
then $\Sigma$ is mean--convex provided $H\leq 0$ and, furthermore, the self--shrinker equation takes the scalar form
\[
\left\langle x-x_0 , \nu \right \rangle= -H.
\]

In this paper we shall consider only self--shrinkers based at $0\in \mathbb{R}^{m+1}$.
Natural examples of complete, properly embedded self-shrinkers are the cylindrical products
\begin{equation}\label{cylinders}
\mathcal{C}^{k,m-k}_{\sqrt{k}}=\mathbb{S}^{k}_{\sqrt{k}}\times \mathbb{R}^{m-k}, \text{ }k=0,...,m,
\end{equation}
which include, as extreme cases, the sphere $\mathbb{S}^{m}_{\sqrt{m}}$ and all the hyperplanes through the origin of $\mathbb{R}^{m+1}$.
Actually, according to a classification theorem by T. Colding and W. Minicozzi, \cite{CoMi-Annals2012}, these are the only complete, embedded
and mean-convex self-shrinkers with extrinsic polynomial volume growth, i.e.,
\[
\mathrm{vol}(\mathbb{B}^{m+1}_R \cap \Sigma) \leq CR^n
\]
for some $C>0$, $n \in \mathbb{N}$ and for every $R>>1$; here $\mathbb{B}^{m+1}_R$ denotes the ball in the ambient Euclidean space.

We stress that it was conjectured by H.-D. Cao, \cite{CaLi-CalcVar}, that every complete self--shrinker has extrinsic polynomial (Euclidean, in fact) volume growth. By a very interesting result due to X. Cheng and D. Zhou, \cite{ChZh-volume}, that completes a previous theorem by Q. Ding and Y.L. Xin, \cite{DiXi-proper}, this is equivalent to the fact that the immersion is proper. Thus, by way of example, if Cao Conjecture was true, then any complete self--shrinker in a ball of $\mathbb{R}^{m+1}$ should be compact. In order to obtain indications on the validity of this conjecture, it is then relevant to understand which geometric constraints are imposed by the assumption that a complete self--shrinker is bounded and to obtain natural and general restrictions that force these hypersurfaces to be compact. For instance, we will prove the following results.

\begin{theorem}\label{intro-th_bounded}
Let $x:\Sigma^{m}\rightarrow\mathbb{B}^{m+1}_{R_0}(0) \subset \mathbb{R}^{m+1}$ be a complete self--shrinker.
\begin{itemize}
\item[(a)] Assume $|\mathbf{A}| \leq 1$. Then: \medskip
\begin{itemize}
\item[(a.1)] $R_{0}\geq\sup_{\Sigma}\left\vert \mathbf{H}\right\vert=\sqrt{m}$.
\item[(a.2)] If $m=2$, then $\Sigma=\mathbb{S}^{2}_{\sqrt{2}}$.
\item[(a.3)] If $m\geq 3$ and $\Sigma$ is non-compact, then $\Sigma$ must be connected at infinity, i.e., it has only one end. Moreover, $|\mathbf{A}|<1$, the universal cover $\tilde\Sigma$ enjoys the loops to infinity property along every ray, \cite{So-Indiana}, and every f.g. subgroup of the fundamental group of $\Sigma$ grows at most polynomially of order $m$.
\end{itemize}
\item[(b)]Assume $\lim_{R\to\infty}\sup_{\Sigma\setminus B_R^{\Sigma}}|\mathbf{A}|<1$. Then $\Sigma$ is compact.
\item[(c)] Assume  $|\mathbf{A}| \in L^{p}(\Sigma)$, for some $p\geq m$. Then $\Sigma$ is compact.

\end{itemize}

\end{theorem}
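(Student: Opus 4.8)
The plan is to work with the weighted (self--shrinker) structure of $\Sigma$. Put $f=\tfrac12|x|^{2}$ and let $\mathcal{L}=\Delta_{\Sigma}-\langle x,\nabla\,\cdot\,\rangle=\Delta_{\Sigma}-\langle\nabla f,\nabla\,\cdot\,\rangle$ be the associated drifted Laplacian, self--adjoint in $L^{2}(e^{-f}d\mathrm{vol}_{\Sigma})$. On a self--shrinker (with the present convention $\langle x,\nu\rangle=-H$) one has the basic identities
\[
\mathcal{L}|x|^{2}=2\bigl(m-|x|^{2}\bigr),\qquad
\mathcal{L}H=\bigl(1-|\mathbf{A}|^{2}\bigr)H,\qquad
\tfrac12\mathcal{L}|\mathbf{A}|^{2}=|\nabla\mathbf{A}|^{2}+|\mathbf{A}|^{2}\bigl(1-|\mathbf{A}|^{2}\bigr),
\]
whence $\mathcal{L}(H^{2})=2H^{2}(1-|\mathbf{A}|^{2})+2|\nabla H|^{2}$. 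I shall also use: (i) since $\mathrm{Hess}\,f=g+\langle\mathbf{H},\mathbf{A}(\cdot,\cdot)\rangle$, the Gauss equation shows that $\mathrm{Ric}_{f}=\mathrm{Ric}+\mathrm{Hess}\,f$ is bounded below as soon as $|\mathbf{A}|$ is bounded; as moreover $|\nabla f|=|x^{\top}|\leq|x|\leq R_{0}$, in that case the $\mathcal{L}$--Omori--Yau weak maximum principle holds on the complete manifold $\Sigma$; (ii) a complete minimal self--shrinker is a hyperplane through the origin (then $x$ is everywhere tangent, so $\Sigma$ is a cone, and completeness forces a plane), so \emph{no bounded complete self--shrinker is minimal}: $H\not\equiv 0$; (iii) by the weighted--volume estimates of \cite{ChZh-volume} completing \cite{DiXi-proper}, $(\Sigma,e^{-f}d\mathrm{vol}_{\Sigma})$ has polynomial weighted--volume growth, so for $m\leq 2$ it is $f$--parabolic, and a bounded complete self--shrinker is compact iff it has finite weighted volume.

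\emph{Part (a).} Since $\Sigma\subset\mathbb{B}^{m+1}_{R_{0}}$, the functions $|x|^{2}$ and $H^{2}=|x^{\perp}|^{2}$ are bounded, and Cauchy--Schwarz gives $H^{2}\leq m|\mathbf{A}|^{2}\leq m$, so $\sup_{\Sigma}|\mathbf{H}|\leq\sqrt m$. Applying the $\mathcal{L}$--Omori--Yau principle to $|x|^{2}$ produces $x_{k}\in\Sigma$ with $|x|^{2}(x_{k})\to\sup_{\Sigma}|x|^{2}$, $|\nabla|x|^{2}|(x_{k})\to 0$ and $\limsup_{k}\mathcal{L}|x|^{2}(x_{k})\leq 0$; the first identity gives $\sup_{\Sigma}|x|^{2}\geq m$, hence $R_{0}\geq\sqrt m$. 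From $\nabla|x|^{2}=2x^{\top}$ we get $|x^{\top}|(x_{k})\to 0$, so $H^{2}(x_{k})=|x|^{2}(x_{k})-|x^{\top}|^{2}(x_{k})\to\sup_{\Sigma}|x|^{2}\geq m$; with the reverse bound this yields $\sup_{\Sigma}|\mathbf{H}|=\sqrt m\leq R_{0}$, proving (a.1). For (a.2), $m=2$ gives $f$--parabolicity; since $|\mathbf{A}|\leq 1$, $\mathcal{L}(H^{2})\geq 0$ and $H^{2}$ is bounded above, so $H^{2}$ is constant, whence $\nabla H\equiv 0$ and $H^{2}(1-|\mathbf{A}|^{2})\equiv 0$; as $\Sigma$ is not minimal, $H^{2}$ is a positive constant and $|\mathbf{A}|\equiv 1$, so the Gauss curvature $K=\tfrac12(H^{2}-|\mathbf{A}|^{2})$ is constant. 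A bounded complete constant--curvature surface immersed in $\mathbb{R}^{3}$ must have $K>0$ (Hilbert's theorem excludes $K<0$, Hartman--Nirenberg's classification of complete flat hypersurfaces excludes $K=0$), hence is compact by Bonnet--Myers and a round sphere by Liebmann's theorem; the self--shrinker equation fixes the radius, so $\Sigma=\mathbb{S}^{2}_{\sqrt 2}$. For (a.3), the third identity together with $|\mathbf{A}|\leq 1$ shows $|\mathbf{A}|^{2}$ is $\mathcal{L}$--subharmonic on all of $\Sigma$; if its supremum were attained, the strong maximum principle would make it constant, and constancy forces $|\mathbf{A}|\equiv 1$ (otherwise $\mathcal{L}|\mathbf{A}|^{2}>0$), hence $\nabla\mathbf{A}\equiv 0$, hence (classification of hypersurfaces with parallel second fundamental form, and boundedness) $\Sigma=\mathbb{S}^{m}_{\sqrt m}$, contradicting non--compactness; so $|\mathbf{A}|<1$ everywhere. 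The one--end statement and the remaining assertions then follow by feeding the bound $\mathrm{Ric}_{f}\geq -c$ and the asymptotic rigidity produced by $\mathcal{L}$--Omori--Yau into the structure theory of manifolds with Bakry--\'Emery Ricci bounded below à la Sormani \cite{So-Indiana}.

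\emph{Parts (b) and (c).} For (b), fix $\ell<\ell_{0}<1$ and an intrinsic ball $B^{\Sigma}_{R_{1}}$ with $|\mathbf{A}|\leq\ell_{0}$ on $E:=\Sigma\setminus B^{\Sigma}_{R_{1}}$; then $|\mathbf{A}|$ is globally bounded. Suppose $\Sigma$ non--compact. On $E$ one has $1-|\mathbf{A}|^{2}\geq 2\delta:=1-\ell_{0}^{2}>0$, so $\mathcal{L}(H^{2})\geq 4\delta H^{2}$ there; testing against $\varphi^{2}e^{-f}$ for a Lipschitz cut--off $\varphi$ supported in $E$, integrating by parts in $e^{-f}d\mathrm{vol}_{\Sigma}$ and using Young's inequality gives the Caccioppoli estimate
\[
\delta\int_{\Sigma}\varphi^{2}H^{2}\,e^{-f}\,d\mathrm{vol}_{\Sigma}\ \leq\ \int_{\Sigma}|\nabla\varphi|^{2}H^{2}\,e^{-f}\,d\mathrm{vol}_{\Sigma}.
\]
Since $H^{2}\leq R_{0}^{2}$, choosing cut--offs $\varphi_{j}\uparrow 1$ with $\int|\nabla\varphi_{j}|^{2}e^{-f}d\mathrm{vol}_{\Sigma}\to 0$ — logarithmic cut--offs adapted to the weighted measure, admissible by the Cheng--Zhou volume growth (and, for $m\leq 2$, just $f$--parabolicity) — forces $H\equiv 0$ on $E$, hence on $\Sigma$ by analytic continuation, so $\Sigma$ is a hyperplane, contradicting $\Sigma\subset\mathbb{B}^{m+1}_{R_{0}}$; thus $\Sigma$ is compact. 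For (c): since $|\mathbf{H}|=|x^{\perp}|\leq R_{0}$ is bounded, the Michael--Simon Sobolev inequality provides a uniform lower volume bound at a fixed small scale, so $\varepsilon$--regularity for the Simons--type equation satisfied by $|\mathbf{A}|$, combined with $|\mathbf{A}|\in L^{p}(\Sigma)$, $p\geq m$, gives $\sup_{\Sigma\setminus B^{\Sigma}_{R}}|\mathbf{A}|\to 0$ as $R\to\infty$; hence $\limsup_{\Sigma}|\mathbf{A}|=0<1$ and (c) reduces to (b).

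The step I expect to be the main obstacle is the choice of cut--offs in (b): a priori one only knows that $e^{-f}d\mathrm{vol}_{\Sigma}$ has polynomial growth, not finite total mass — the latter being equivalent to the very compactness one is trying to prove — so the capacity--type vanishing $\int|\nabla\varphi_{j}|^{2}e^{-f}d\mathrm{vol}_{\Sigma}\to 0$ must be arranged carefully, and it is precisely here that the \emph{strict} inequality $\limsup_{\Sigma}|\mathbf{A}|<1$ is essential: it supplies the positive constant $\delta$ that makes the Caccioppoli estimate effective, whereas under the weaker hypothesis $|\mathbf{A}|\leq 1$ of part (a) one has only $\delta=0$, the estimate degenerates to a triviality, and no compactness follows — consistently with the fact that in that range one can only describe the non--compact case, not exclude it.
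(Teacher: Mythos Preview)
Your preliminary item (iii) is incorrect and this propagates through the argument. The Cheng--Zhou / Ding--Xin equivalence says that a complete self--shrinker has polynomial (weighted) volume growth \emph{if and only if} it is properly immersed. A bounded, complete, non--compact self--shrinker is precisely \emph{not} proper, so you cannot invoke \cite{ChZh-volume} to obtain polynomial weighted volume growth in the cases you are trying to rule out. In part (a) this is harmless, because the correct source of volume control is $\mathrm{Ric}_f\geq 1-|\mathbf{A}|^2\geq 0$ together with $f$ bounded, which by \cite{Y} (or Wei--Wylie) gives $\mathrm{vol}(B_R^\Sigma)\leq CR^m$; this is what the paper uses, and it is what actually justifies parabolicity for $m=2$ and the Omori--Yau principle.

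The real damage occurs in part (b). Your Caccioppoli estimate is fine, but the existence of cut--offs with $\int|\nabla\varphi_j|^2e^{-f}\to 0$ is exactly $f$--parabolicity, and you have no way to obtain it: Cheng--Zhou is unavailable (as above), and the lower bound $\mathrm{Ric}_f\geq -C$ coming from $|\mathbf{A}|\in L^\infty$ only yields exponential volume growth. You correctly flag this step as the obstacle, but it is not merely delicate --- as stated, the argument does not close. The paper takes a completely different route: from $|\mathbf{A}|\in L^\infty$, $|\nabla f|\leq R_0$ and the $\mathrm{Ric}_f$ lower bound it deduces that $\Sigma_f$ is both stochastically complete and Feller for $\Delta_f$ (via \cite{BPS-RevMatIb}); since $\Delta_f|\mathbf{A}|^2\geq\lambda|\mathbf{A}|^2$ outside a compact set, these two stochastic properties together force $|\mathbf{A}|(p)\to 0$ as $p\to\infty$. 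Then $\mathrm{Hess}(f)(\dot\gamma,\dot\gamma)=1-\langle\mathbf{H},\mathbf{A}(\dot\gamma,\dot\gamma)\rangle\geq\tfrac12$ along any ray for $t\gg1$, so $|x|^2\to\infty$, contradicting boundedness. No volume information and no cut--offs are needed. Your part (c) is essentially the paper's (the $\varepsilon$--regularity step is the same), but you should finish with this Hessian--along--rays argument rather than reducing to your flawed (b).

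Two smaller points. For (a.2) the paper's argument is one line: $|x|^2$ is bounded and $\Delta$--subharmonic on a parabolic surface, hence constant, so $\Sigma=\mathbb{S}^2_{\sqrt2}$; your detour through $H^2$, constant Gauss curvature and Hilbert/Liebmann works but is unnecessary. For (a.3) the one--end statement needs a genuine argument: the paper uses the Cheeger--Gromoll--Lichnerowicz splitting theorem for $\mathrm{Ric}_f\geq 0$ with bounded $f$ to exclude lines (the key being that along the split $\mathbb{R}$--factor one has $\mathrm{Hess}(f)(\partial_t,\partial_t)=0$, which contradicts either $\mathrm{Ric}_f>0$ when $|\mathbf{A}|<1$, or the Lawson classification when $|\mathbf{A}|\equiv1$). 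Your appeal to ``structure theory \`a la Sormani'' is not a proof of connectedness at infinity.
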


More generally, one can try to understand the geometry of self--shrinkers which are confined in a connected region bounded by some dilated cylinder $\mathcal{C}^{k,m-k}_{R}$, $R\geq \sqrt{k}$. In this setting, as a preliminary and simple fact, we observe the validity of the following (finite) strong maximum principle.

\begin{theorem}
Let $x\colon \Sigma^{m}\rightarrow\mathbb{R}^{m+1}$ be a complete self-shrinker. Assume that
$\left\vert \mathbf{H}\right\vert \leq\sqrt{k}$ and that $x\left(  \Sigma\right)  $ is confined inside the domain
bounded by $\mathcal{C}^{k,m-k}_{R}$.
If $x\left(  \Sigma\right)  \cap \mathcal{C}^{k,m-k}_{R} \neq\emptyset$ then:
\begin{itemize}
\item[(a)] $R=\sqrt{k}$,
\item[(b)] $x:\Sigma \rightarrow\mathcal{C}^{k,m-k}_{\sqrt{k}}$ is a Riemannian covering map. In particular, if
$k\geq2$, then $\Sigma=\mathcal{C}^{k,m-k}_{\sqrt{k}}$ in the Riemannian sense.
\end{itemize}
\end{theorem}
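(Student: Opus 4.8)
The plan is to run a finite maximum principle for the drifted Laplacian $\mathcal L=\Delta-\langle\nabla\,\cdot\,,x^{\top}\rangle$ of $\Sigma$ (here $x^{\top}$ denotes the tangential part of the position vector) applied to the squared distance to the axis of the cylinder. Split $\mathbb R^{m+1}=\mathbb R^{k+1}\times\mathbb R^{m-k}$, decompose the position vector as $x=(x_{1},x_{2})$, so that $\mathcal C^{k,m-k}_{R}=\{|x_{1}|=R\}$ bounds the region $\{|x_{1}|\le R\}$, and set $u:=|x_{1}|^{2}$. Confinement gives $u\le R^{2}$ on $\Sigma$, while the intersection hypothesis produces $p\in\Sigma$ with $u(p)=R^{2}$; as $\Sigma$ is boundaryless, $p$ is an interior maximum point of $u$. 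Starting from $\Delta x=\mathbf H=-x^{\perp}$ one checks that $\mathcal L\langle x,v\rangle=-\langle x,v\rangle$ for every fixed $v\in\mathbb R^{m+1}$, hence $\mathcal L(\langle x,e_{i}\rangle^{2})=-2\langle x,e_{i}\rangle^{2}+2|e_{i}^{\top}|^{2}$; summing over an orthonormal basis $e_{1},\dots,e_{k+1}$ of the first factor and using that $\Sigma$ is a hypersurface yields
\[
\mathcal L u \;=\; -2u+2(k+1)-2|\nu'|^{2}\;=\;2(k-u)+2|\nu''|^{2},
\]
where $\nu'$ and $\nu''$ are the projections of the unit normal $\nu$ onto $\mathbb R^{k+1}$ and $\mathbb R^{m-k}$, so that $|\nu'|^{2}+|\nu''|^{2}=1$.

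To prove (a): at the maximum point $p$ we have $\nabla u(p)=0$, and since $\nabla u$ equals twice the tangential projection of the $\mathbb R^{k+1}$--component of $x$, this says that the vector $(x_{1}(p),0)\in\mathbb R^{m+1}$ is orthogonal to $T_{p}\Sigma$. As $|x_{1}(p)|=R>0$ and $(T_{p}\Sigma)^{\perp}$ is one--dimensional, necessarily $\nu(p)=\pm(x_{1}(p),0)/R$ and $T_{p}\Sigma$ is exactly the tangent space of $\mathcal C^{k,m-k}_{R}$ at $x(p)$. Feeding $\nu(p)=\pm(x_{1}(p),0)/R$ into the scalar self--shrinker equation $H=-\langle x,\nu\rangle$ gives $|\mathbf H(p)|=|x_{1}(p)|^{2}/R=R$, whence the hypothesis $|\mathbf H|\le\sqrt k$ forces $R\le\sqrt k$; combined with $R\ge\sqrt k$ this gives $R=\sqrt k$.

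For (b): once $R=\sqrt k$ the identity above reads $\mathcal L u=2(k-u)+2|\nu''|^{2}\ge0$ on all of $\Sigma$, because $u\le k$. Thus $u$ is $\mathcal L$--subharmonic and attains its supremum $k$ at the interior point $p$; as $\mathcal L$ is elliptic with smooth coefficients and no zero--order term and $\Sigma$ is connected, Hopf's strong maximum principle forces $u\equiv k$, i.e. $x(\Sigma)\subseteq\mathcal C^{k,m-k}_{\sqrt k}$. Consequently $x$ factors through an isometric immersion $\bar x\colon\Sigma^{m}\to\mathcal C^{k,m-k}_{\sqrt k}$ between Riemannian manifolds of the same dimension, hence a local isometry; since $\Sigma$ is complete and the target is connected, $\bar x$ is a Riemannian covering map. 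If $k\ge2$, the sphere factor $\mathbb S^{k}_{\sqrt k}$ — hence $\mathcal C^{k,m-k}_{\sqrt k}$ — is simply connected, so it admits no nontrivial connected covering and $\bar x$ is a global isometry: $\Sigma=\mathcal C^{k,m-k}_{\sqrt k}$.

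The only genuinely delicate point is the contact analysis at $p$ in step (a): turning the vanishing of $\nabla u(p)$ into the statement that $\nu(p)$ is radial in the $\mathbb R^{k+1}$--factor, since this is exactly where the pinching $|\mathbf H|\le\sqrt k$ enters to collapse the radius down to $\sqrt k$. The remaining ingredients — the Bochner--type computation of $\mathcal L u$, the strong maximum principle, and the fact that a local isometry out of a complete manifold is a covering — are classical.
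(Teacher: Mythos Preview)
Your argument is correct and takes a genuinely different route from the paper. The paper obtains this statement as a corollary of its general maximum principle (Theorem~\ref{th_maxprinc}), valid whenever the confining boundary $\partial\Omega$ is an arbitrary properly embedded mean-convex self-shrinker: at a touching point one compares the mean curvatures of $\Sigma$ and of the dilated boundary $\partial\Omega_{\lambda}$, invokes the classical touching principle for the mean curvature operator to obtain local coincidence, and propagates this by an open--closed argument; the identification $\partial\Omega=\mathbb S^{k}_{\sqrt k}\times\mathbb R^{m-k}$ then appeals to the Colding--Minicozzi classification. You instead exploit the cylindrical geometry from the outset, computing $\mathcal L|x_{1}|^{2}$ explicitly and applying the strong maximum principle for the drifted Laplacian --- in effect extending to cylinders the analytic proof the paper gives only for the sphere case (Proposition~\ref{prop_maxprinc_sphere}). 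Your route is more elementary and self-contained (no touching principle for quasilinear graph operators, no Colding--Minicozzi), at the price of being tailored to cylinders; the paper's argument is heavier but covers arbitrary mean-convex self-shrinker boundaries.

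One small omission: you assert ``combined with $R\ge\sqrt k$'' without justification. This follows immediately from your own formula: at the interior maximum $p$ you have $\nabla u(p)=0$ and $\Delta u(p)\le 0$, hence $\mathcal L u(p)\le 0$; since you have already shown $\nu''(p)=0$, this reads $0\ge 2(k-R^{2})$. Alternatively, once $R\le\sqrt k$ is known you already have $u\le R^{2}\le k$ on $\Sigma$, so $\mathcal L u\ge 0$ everywhere; the strong maximum principle then gives $u\equiv R^{2}$, and substituting back yields $0=\mathcal L u=2(k-R^{2})+2|\nu''|^{2}$, forcing $R=\sqrt k$ and $\nu''\equiv 0$ simultaneously.
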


Actually, when $k=0$ and, hence, $\mathcal{C}^{0,m}$ is a hyperplane through the origin, it is reasonable to expect that
the self--shrinker cannot be located into one of the corresponding half-spaces. We are able to verify that, to a certain extent, this is in fact true.
The next result can be considered as a weak half--space theorem for complete self--shrinkers.
\begin{theorem}
Let $x\colon \Sigma^{m}\rightarrow\mathbb{R}^{m+1}$ be
a complete, self--shrinker. Assume that either one of the following assumptions is satisfied:
\begin{itemize}
\item[(a)]$\Sigma$ has extrinsic polynomial volume growth (equivalently, $\Sigma$ is properly immersed).

\item[(b)] $\left\vert \mathbf{A}\right\vert ^2 \in L^{p} \left(
d\mathrm{vol}_{f}\right)$ with $\left\vert \mathbf{A}\right\vert ^{2}\leq 1+\frac{1}{p}$,
for some $p> 1$.
\end{itemize}
Then, for every hyperplane $\Pi$ through the origin of $\mathbb{R}^{m+1}$, $\Sigma$ cannot be contained
in one of the closed half--spaces determined by $\Pi$ unless $\Sigma=\Pi$.
\end{theorem}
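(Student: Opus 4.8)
The plan is to confront the half--space hypothesis with the drifted Laplacian $\mathcal{L}=\Delta_{\Sigma}-\langle\nabla f,\nabla\,\cdot\,\rangle$ of the weighted manifold $(\Sigma,d\mathrm{vol}_{f})$, where $f=\tfrac12|x|^{2}$ and $d\mathrm{vol}_{f}=e^{-f}\,d\mathrm{vol}$. Fix a unit vector $a\in\mathbb{R}^{m+1}$, set $\Pi=a^{\perp}$, and assume, after replacing $a$ by $-a$ if necessary, that $x(\Sigma)\subseteq\{\,y\colon\langle y,a\rangle\geq 0\,\}$. A direct computation from $x^{\perp}=-\mathbf{H}$ gives $\mathcal{L}x_{i}=-x_{i}$ for every ambient coordinate, so the height function $u:=\langle x,a\rangle$ satisfies $\mathcal{L}u=-u$, $u\geq 0$, and $|\nabla u|=|a^{\top}|\leq 1$; in particular $u$ is a non--negative $\mathcal{L}$--superharmonic function. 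The whole point is then to prove that $u\equiv 0$: granting this, the induced metric on $\Sigma$ is the pull--back of the flat metric of $\Pi\cong\mathbb{R}^{m}$, hence $x\colon\Sigma\to\Pi$ is a local isometry between $m$--manifolds; completeness of $\Sigma$ makes it a Riemannian covering and, $\mathbb{R}^{m}$ being simply connected, an isometry onto $\Pi$, that is $\Sigma=\Pi$.

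To force $u\equiv 0$ I would establish a suitable Liouville property for $\mathcal{L}$ on $\Sigma$, and this is exactly what hypotheses (a) and (b) are meant to supply. Under (a), properness of the immersion is equivalent to extrinsic polynomial volume growth $\mathrm{vol}(\mathbb{B}^{m+1}_{R}\cap\Sigma)\leq CR^{n}$, and a dyadic splitting of the Gaussian weight gives at once $\mathrm{vol}_{f}(\Sigma)=\int_{\Sigma}e^{-|x|^{2}/2}\,d\mathrm{vol}<\infty$. A geodesically complete weighted manifold of finite weighted volume is $f$--parabolic (Grigor'yan's volume test, $\int^{\infty}t\,\mathrm{vol}_{f}(B^{\Sigma}_{t})^{-1}\,dt=\infty$), so every bounded--below $\mathcal{L}$--superharmonic function is constant; hence $u\equiv c\geq 0$, and substituting back into $\mathcal{L}u=-u$ forces $c=0$. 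This settles case (a).

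For case (b), where properness is not available, the curvature hypotheses must be used directly, and I would feed them into an integral (Simons--type) estimate. The Simons identity for self--shrinkers reads $\mathcal{L}|\mathbf{A}|^{2}=2|\nabla\mathbf{A}|^{2}+2|\mathbf{A}|^{2}(1-|\mathbf{A}|^{2})$. Testing it — after retaining a Kato--type inequality for $|\nabla\mathbf{A}|^{2}$ — against $|\mathbf{A}|^{2q}\psi^{2}$ for a suitable exponent $q=q(p)$ and a logarithmic cutoff $\psi$, and integrating against $d\mathrm{vol}_{f}$, the hypothesis $|\mathbf{A}|^{2}\in L^{p}(d\mathrm{vol}_{f})$ makes the cutoff term vanish in the limit $\psi\to 1$, while the pointwise bound $|\mathbf{A}|^{2}\leq 1+\tfrac1p$ is precisely the threshold (matched to $q$) that lets the term $\sim\int|\mathbf{A}|^{2q}(|\mathbf{A}|^{2}-1)$ be reabsorbed against the gradient terms, leaving $\nabla\mathbf{A}\equiv 0$. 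By the classification of hypersurfaces of $\mathbb{R}^{m+1}$ with parallel second fundamental form, together with the self--shrinker equation, $\Sigma$ is then either a hyperplane through the origin or one of the cylinders $\mathbb{S}^{k}_{\sqrt{k}}\times\mathbb{R}^{m-k}$ with $k\geq 1$; but these cylinders meet both open half--spaces of every hyperplane through the origin, so the half--space hypothesis rules them out. Thus $\Sigma$ is a hyperplane through the origin, and a hyperplane through the origin contained in a closed half--space bounded by $\Pi$ must coincide with $\Pi$: again $\Sigma=\Pi$. (Alternatively, the Gauss equation and $f=\tfrac12|x|^{2}$ give the Bakry--Émery bound $\mathrm{Ric}_{f}\geq(1-|\mathbf{A}|^{2})g\geq-\tfrac1p\,g$, whose negative part is controlled by the hypotheses of (b); this too yields $f$--parabolicity, and one then concludes as in case (a).)

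The routine parts are case (a) and the concluding covering argument. The main obstacle is the integral estimate underlying case (b): the test function has to be calibrated so that the excess of $|\mathbf{A}|^{2}$ over the critical value $1$ is exactly absorbed against the gradient terms — which is what ties the curvature threshold $1+\tfrac1p$ to the integrability exponent $p$ — and, this being the weighted setting, one must keep careful track of the extra terms produced by the drift $\nabla f=x^{\top}$, which is unbounded on a non--compact self--shrinker.
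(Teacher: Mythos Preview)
Your treatment of case (a) is correct and is exactly the paper's argument: polynomial extrinsic volume growth forces $\mathrm{vol}_f(\Sigma)<\infty$, hence $f$--parabolicity, hence the non--negative $\Delta_f$--superharmonic height $u=L(x)$ is constant, and then $\Delta_f u=-u$ gives $u\equiv 0$.

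Case (b), however, diverges from the paper in a way that leaves a real gap. The paper does \emph{not} try to extract $D\mathbf{A}\equiv 0$ from Simons and a cutoff alone; it uses the half--space hypothesis a second time, at the analytic level. Assuming $x(\Sigma)\neq\Pi$, the strong minimum principle upgrades $L(x)\geq 0$ to $L(x)>0$, and this $L$ is a \emph{positive} solution of $\Delta_f L+L=0$. The pointwise bound $|\mathbf{A}|^2\leq 1+\tfrac1p$ is then read as $p(|\mathbf{A}|^2-1)\leq 1$, which turns the equation into the differential inequality
\[
\Delta_f L+p\bigl(|\mathbf{A}|^{2}-1\bigr)L\leq 0,
\]
i.e.\ $L>0$ is a supersolution of the Schr\"odinger operator with potential $p(|\mathbf{A}|^{2}-1)$. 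This is paired with the Simons subsolution inequality $\Delta_f|\mathbf{A}|+|\mathbf{A}|(|\mathbf{A}|^{2}-1)\geq 0$ and the integrability $|\mathbf{A}|^2\in L^{p}(d\mathrm{vol}_f)$ inside a Liouville--type comparison theorem (Theorem~8 of \cite{Ri-self}), which outputs $|\mathbf{A}|\equiv 0$ or $|\mathbf{A}|\equiv 1$; one then reads off $D\mathbf{A}\equiv 0$ from the Simons equality and finishes via Lawson's classification exactly as you do. The positive supersolution $L$ is what replaces the missing volume control.

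Your direct route tries to dispense with $L$ and get $D\mathbf{A}\equiv 0$ from the hypotheses of (b) by themselves. But if you carry out the test you describe, multiplying the Simons inequality by $|\mathbf{A}|^{2q}\psi^{2}$ and integrating against $d\mathrm{vol}_f$, the ``good'' sign sits on the gradient term $-(2q{+}1)\int|\mathbf{A}|^{2q}|\nabla|\mathbf{A}||^{2}\psi^{2}$, while the bound $|\mathbf{A}|^{2}-1\leq \tfrac1p$ only yields
\[
\int |\mathbf{A}|^{2q+2}\bigl(|\mathbf{A}|^{2}-1\bigr)\psi^{2}\,d\mathrm{vol}_f\ \leq\ \tfrac1p\int |\mathbf{A}|^{2q+2}\psi^{2}\,d\mathrm{vol}_f,
\]
which is finite (for the right $q$) but does not vanish and cannot be absorbed by a gradient term of the opposite sign. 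No choice of $q$ closes this; you would be proving, without any half--space assumption, that every complete self--shrinker satisfying (b) is a cylinder $\mathbb{S}^k_{\sqrt{k}}\times\mathbb{R}^{m-k}$, which is strictly stronger than what the hypotheses are known to give. Your parenthetical alternative also fails: a uniform lower bound $\mathrm{Ric}_f\geq -\tfrac1p$ does \emph{not} imply $f$--parabolicity (think of hyperbolic space, or of Euclidean $\mathbb{R}^m$ with $f\equiv 0$ and $m\geq 3$); parabolicity needs a volume--growth condition, precisely the ingredient that (b) does not supply and that the paper replaces by the positive supersolution $L$.
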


Accordingly, and in view of the strong maximum principle, it is also reasonable to assume that some transversal intersection between a self--shrinker and a hyperplane through the origin occurs. When such an intersection is compact, we can obtain information on the spectrum of the natural drifted Laplacian $\Delta_{f}=\Delta - \left \langle \nabla, \nabla f \right \rangle$, with $f=|x|^2/2$.

\begin{theorem}
Let $i:\Sigma^{m}\hookrightarrow\mathbb{R}^{m+1}$ be a \ complete, embedded
self-shrinker. Assume that, for some hyperplane $\Pi\approx\mathbb{R}^{m}$
through the origin, $\Sigma\cap\Pi=K$ is a compact $\left(  m-1\right)
$-dimensional submanifold. Then:

\begin{enumerate}
\item[(a)] for every connected component $\Sigma_{1}$ of $\Sigma\backslash K$
(which is an open submanifold $\Sigma_{1}\subset\Sigma$ with $\partial
\Sigma_{1}\subseteq K$) it holds $\lambda_{1}(  -\Delta_{f}^{\Sigma_{1}})  \geq1.$

\item[(b)] If either $\Sigma$ is compact or $\Sigma$ has only one end, then
there exists a compact connected component $\Sigma_{2}$ of $\Sigma\backslash
K$ such that $\lambda_{1}(  -\Delta_{f}^{\Sigma_{2}})  =1.$

\item[(c)] If $\Sigma_{3}$ is an end of $\Sigma$ with respect to $K$ and%
\[
\mathrm{vol}\left(  \Sigma_{3}\cap \mathbb{B}_{R}^{m+1}\right)
=O(  e^{\alpha R^{2}})  \text{, as }R\rightarrow+\infty,
\]
for some $0\leq\alpha<1/2$, then $\lambda_{1}(  -\Delta_{f}^{\Sigma_{3}})  =1.$

\end{enumerate}
\end{theorem}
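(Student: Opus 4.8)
The plan rests on the observation that, for a self--shrinker based at the origin, the height function relative to $\Pi$ is a Dirichlet eigenfunction of the drifted Laplacian of eigenvalue $1$. Concretely, pick a unit normal $v$ to $\Pi$ and set $u=\langle x,v\rangle$ on $\Sigma$; since $\Sigma$ is embedded, $K=\Sigma\cap\Pi=u^{-1}(0)$. From $\Delta^{\Sigma}x=\mathbf H=-x^{\perp}=-x+\nabla f$, with $f=|x|^{2}/2$ and $\nabla f=x^{\top}$, projecting onto the constant vector $v$ yields
\[
-\Delta_f u=u\qquad\text{on }\Sigma .
\]
I would first record that $u$ cannot keep one sign on $\Sigma$: if, say, $u\ge 0$, then $u$ is $\Delta_f$--superharmonic and attains the interior minimum value $0$ along $K\neq\emptyset$, hence $u\equiv 0$ by the strong maximum principle, which forces $\Sigma=\Pi$ and contradicts $\dim K=m-1$. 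Thus $\{u>0\}$ and $\{u<0\}$ are both nonempty, and the connected components of $\Sigma\setminus K$ are exactly the nodal domains of $u$.

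For (a), on a component $\Sigma_1$ the function $u$ (or $-u$) is positive in the interior, vanishes on $\partial\Sigma_1\subseteq K$, and solves $-\Delta_f u=1\cdot u$; a Barta--type inequality for the weighted Laplacian (ground--state substitution $\phi=u\,w$, $w\in C_c^{\infty}(\Sigma_1)$) then gives $\lambda_1(-\Delta_f^{\Sigma_1})\ge\inf_{\Sigma_1}(-\Delta_f u)/u=1$. For (b), I would note that $\Sigma\setminus K$ has finitely many components and that, in either listed case, at least one of them is compact: if $\Sigma$ is compact this is clear, while if $\Sigma$ has a single end then at most one component of $\Sigma\setminus K$ is non--compact, whereas, as observed above, both $\{u>0\}$ and $\{u<0\}$ are nonempty. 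On such a compact component $\Sigma_2$, the positive function $u$ (resp.\ $-u$) is a positive Dirichlet eigenfunction for the eigenvalue $1$, hence the ground state, so $\lambda_1(-\Delta_f^{\Sigma_2})=1$; combined with (a) this is an equality.

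For (c), the bound $\lambda_1(-\Delta_f^{\Sigma_3})\ge 1$ is already furnished by (a), so I only need test functions whose Rayleigh quotient tends to $1$. Decomposing $\Sigma_3$ into extrinsic annuli $\{R\le|x|<R+1\}$ and using $u^{2}\le|x|^{2}$ together with $\mathrm{vol}(\Sigma_3\cap\mathbb B_R^{m+1})=O(e^{\alpha R^{2}})$, $\alpha<1/2$, I obtain $0<\int_{\Sigma_3}u^{2}\,d\mathrm{vol}_f<\infty$ (the series $\sum_R (R+1)^{2}e^{-R^{2}/2}e^{\alpha(R+1)^{2}}$ converges precisely because $\alpha<1/2$). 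Taking intrinsic cut--offs $\chi_j$ with $\chi_j\equiv 1$ on $B_j^{\Sigma_3}$, $\mathrm{supp}\,\chi_j\subset B_{2j}^{\Sigma_3}$ and $|\nabla\chi_j|\le C/j$, the function $u\chi_j$ is compactly supported and vanishes on $\partial\Sigma_3$, hence admissible; integrating $\int\chi_j^{2}|\nabla u|^{2}d\mathrm{vol}_f$ by parts — the boundary term dying because $u|_{\partial\Sigma_3}=0$ — gives the identity
\[
\frac{\int_{\Sigma_3}|\nabla(u\chi_j)|^{2}\,d\mathrm{vol}_f}{\int_{\Sigma_3}(u\chi_j)^{2}\,d\mathrm{vol}_f}=1+\frac{\int_{\Sigma_3}u^{2}|\nabla\chi_j|^{2}\,d\mathrm{vol}_f}{\int_{\Sigma_3}\chi_j^{2}u^{2}\,d\mathrm{vol}_f},
\]
whose error term is at most $\tfrac{C}{j^{2}}\cdot\big(\int_{\Sigma_3}u^{2}d\mathrm{vol}_f\big)/\big(\int_{\Sigma_3}\chi_j^{2}u^{2}d\mathrm{vol}_f\big)\to 0$ by monotone convergence. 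Hence $\lambda_1(-\Delta_f^{\Sigma_3})\le 1$, and equality holds.

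The step I expect to be most delicate is (c): one must be sure that the weighted integrals converge and that the cut--off functions really are legitimate test functions for $\lambda_1$ on a non--compact, possibly non--properly immersed end. This is exactly where the Gaussian weight $e^{-|x|^{2}/2}$ and the strictly sub--Gaussian growth $\alpha<1/2$ are used, and working with intrinsic rather than extrinsic cut--offs avoids any appeal to properness of $i$.
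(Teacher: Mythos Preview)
Your argument is correct and follows the same route as the paper: the height function $u=\langle x,v\rangle$ solves $-\Delta_f u=u$, Barta gives (a), a compact nodal domain gives (b), and for (c) the sub--Gaussian volume growth forces $u\in L^2(d\mathrm{vol}_f)$ so that the cut--offs $u\chi_j$ have Rayleigh quotient tending to $1$ (the paper packages this last step as $u\in W^{1,2}_0(\Sigma_3,d\mathrm{vol}_f)$, hence $u$ lies in the operator domain, which is the same computation). Your aside that $\Sigma\setminus K$ has finitely many components is neither proved nor needed, and note that $u\chi_j\notin C^\infty_c(\Sigma_3)$ since it does not vanish in a \emph{neighborhood} of $\partial\Sigma_3$; its admissibility rests on the fact, recorded in the paper, that a $W^{1,2}$ function vanishing on a compact boundary already lies in $W^{1,2}_0$.
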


\bigskip

\noindent \textbf{Acknowledgments.} The authors would like to thank Pacelli Bessa, Debora Impera and Giona Veronelli for their interest in this work and for several suggestions that have improved the presentation of the paper.
\section{Some notations}

Throughout the paper we let%
\[
f=\frac{\left\vert x\right\vert ^{2}}{2}%
\]
and we denote by $d\mathrm{vol}_{f}$ the corresponding weighted volume measure of $\Sigma$, i.e.,%
\[
d\mathrm{vol}_f=e^{-f}d\mathrm{vol}.
\]
Thus, $\Sigma_{f}=\left(  \Sigma,g,d\mathrm{vol}_{f}\right)  $ is a smooth metric measure
space. The weighted measure of the intrinsic geodesic ball $B_{R}^{\Sigma}(o)=\{p\in \Sigma:d_\Sigma(o,p)<R\}$ is
given by%
\[
\mathrm{vol}_{f}\left(  B_{R}^{\Sigma}\right)  =\int_{B_{R}^{\Sigma}}d\mathrm{vol}_{f}.
\]
Note that, obviously,
\[
\mathrm{vol}_{f}\left(  B_{R}^{\Sigma}(o)\right) \leq \mathrm{vol}_{f}\left( \mathbb{B}_{R}(x(o))\cap \Sigma \right),
\]
where $\mathbb{B}_{R}$ denotes the Euclidean ball.

There is a natural drifted Laplacian on $\Sigma_{f}$ defined by%
\[
\Delta_{f}=e^{f}\operatorname{div}\left(  e^{-f}\nabla\right)  =\Delta
-\left\langle \nabla,\nabla f\right\rangle .
\]
It is symmetric on $L^{2}\left(  d\mathrm{vol}_{f}\right)  $ and it can be expressed in the equivalent form%
\[
\Delta_{x^{T}}=\Delta-\left\langle \nabla,x^{T}\right\rangle ,
\]
where $x^{T}$ denotes the tangential component of the immersion.

Recall also that the Bakry-Emery Ricci tensor of $\Sigma_{f}$ is defined by%
\[
\mathrm{Ric}_{f}=\mathrm{Ric}+\mathrm{Hess}\left(  f\right)  .
\]
Using once again the self--shrinker equation we easily obtain the following
very important estimate,  \cite{Ri-self},%
\begin{equation}
\mathrm{Ric}_{f}\geq1-\left\vert \mathbf{A}\right\vert ^{2}\label{ric_f}%
\end{equation}
where $\mathbf{A}$ denotes the second fundamental tensor of the immersion
$x\colon \Sigma^{m}\rightarrow\mathbb{R}^{m+1}$. Indeed, by Gauss equations,%
\[
\mathrm{Ric}\geq\left\langle \mathbf{H},\mathbf{A}\right\rangle -\left\vert
\mathbf{A}\right\vert ^{2}g,
\]
whereas, by the self-shrinker equation,%
\[
\mathrm{Hess}(f)
=g+\left\langle x^{\bot},\mathbf{A}\right\rangle =g-\left\langle
\mathbf{H},\mathbf{A}\right\rangle .
\]

\section{A maximum principle} \label{Section maxprinc}

To begin with, we observe that if a complete self--shrinker with $|\mathbf{A}|\leq 1$ is contained in a ball and it is tangent to the boundary of this ball at a point, then it must be the standard sphere $\mathbb{S}^{m}_{\sqrt{m}}$. The analytic proof is a straightforward application of the maximum principle for subharmonic functions. Later on, in Section \ref{Section A1}, we shall come back on this kind of arguments.

\begin{proposition}\label{prop_maxprinc_sphere}
Let $x:\Sigma^m\to\mathbb{R}^{m+1}$ be a complete bounded self--shrinker with $|\mathbf{H}|\leq \sqrt{m}$. If there exist $x_0\in \Sigma$ such that $|x|(x_0)=\sup_{\Sigma}|x|$, then $|x|\equiv\sqrt{m}$ and $\Sigma$ is the standard sphere $\mathbb{S}^{m}_{\sqrt{m}}$.
\end{proposition}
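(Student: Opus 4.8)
The plan is to play off the self-shrinker equation against the elementary fact that the function $|x|^2$ satisfies a nice drifted-Laplacian identity on any immersed hypersurface, and then invoke the strong maximum principle. First I would compute $\Delta_f |x|^2$, where $\Delta_f = \Delta - \langle \nabla \cdot, x^T\rangle$. Using that on a hypersurface $\Delta x = \mathbf{H}$ and $\nabla |x|^2 / 2 = x^T$, together with the self-shrinker relation $x^\perp = -\mathbf{H}$, one gets the standard identity
\begin{equation*}
\tfrac{1}{2}\Delta_f |x|^2 = m - |x^\perp|^2 - |x^T|^2 + \langle x, \mathbf{H}\rangle = m - |x|^2,
\end{equation*}
after noting $|x^\perp|^2 = |\mathbf{H}|^2 = -\langle x,\mathbf{H}\rangle$ and $|x^T|^2 + |x^\perp|^2 = |x|^2$. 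So if we set $u = |x|^2$, then $\Delta_f u = 2(m-u)$, i.e.\ $\Delta_f u \le 0$ on the region where $u \ge m$.

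Second, I would use boundedness and completeness of $\Sigma$ together with the hypothesis that $u$ attains an interior maximum at $x_0$. Since $x(\Sigma)$ is bounded, $M := \sup_\Sigma |x|^2$ is finite and, by assumption, $u(x_0) = M$. Applying the finite-dimensional (Hopf) strong maximum principle to the elliptic operator $\Delta_f$ (which is just $\Delta$ plus a smooth first-order term, hence locally uniformly elliptic with the same maximum principle) at the interior maximum point $x_0$: near $x_0$ we have $u \le M$, so $\Delta_f u = 2(m-u) \ge 2(m-M)$; but I still need the sign. Here is where I use $|\mathbf{H}| \le \sqrt{m}$: at $x_0$, since $u$ has a maximum, $\nabla u(x_0) = 0$, i.e.\ $x^T(x_0) = 0$, so $|x|^2(x_0) = |x^\perp|^2(x_0) = |\mathbf{H}|^2(x_0) \le m$. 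Thus $M = u(x_0) \le m$. Consequently $\Delta_f u = 2(m-u) \ge 2(m - M) \ge 0$ everywhere on $\Sigma$, so $u$ is a bounded $\Delta_f$-subharmonic function attaining its interior maximum; by the strong maximum principle $u \equiv M$ on $\Sigma$. Plugging back into $\Delta_f u \equiv 0$ forces $M = m$, so $|x| \equiv \sqrt{m}$.

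Third, I would identify $\Sigma$. Since $|x| \equiv \sqrt{m}$ is constant, $\Sigma$ is immersed in the sphere $\mathbb{S}^m_{\sqrt m} \subset \mathbb{R}^{m+1}$; but a codimension-one-in-$\mathbb{R}^{m+1}$, i.e.\ codimension-zero-in-$\mathbb{S}^m_{\sqrt m}$, connected complete immersion into a complete manifold of the same dimension is a Riemannian covering map, and since $\mathbb{S}^m_{\sqrt m}$ is simply connected for $m \ge 2$ (and for $m=1$ the circle case is degenerate/excluded by completeness considerations, or handled directly), the immersion $x$ is a diffeomorphism onto $\mathbb{S}^m_{\sqrt m}$. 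Hence $\Sigma = \mathbb{S}^m_{\sqrt m}$.

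The main obstacle I anticipate is the bookkeeping in the $\Delta_f|x|^2$ computation and, more delicately, making sure the logic of the maximum-argument is airtight: the crucial observation is that the interior maximum point, where the gradient vanishes, already forces $\sup_\Sigma|x|^2 = |x|^2(x_0) \le |\mathbf{H}|^2(x_0) \le m$, which is exactly what flips $\Delta_f|x|^2$ into being non-negative globally and lets the strong maximum principle close the argument. A secondary point requiring a line of justification is that the strong maximum principle for $\Delta_f$ applies without any completeness-at-infinity hypothesis precisely because we have an honest interior maximum; no stochastic completeness or Omori-Yau machinery is needed here.
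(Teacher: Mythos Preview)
Your argument is correct in spirit, but it takes a detour the paper avoids. The paper works with the \emph{undrifted} identity
\[
\Delta|x|^{2}=2\bigl(m-|\mathbf{H}|^{2}\bigr),
\]
so the hypothesis $|\mathbf{H}|\le\sqrt{m}$ gives $\Delta|x|^{2}\ge0$ immediately, and the strong maximum principle applies with no preliminary step. You instead use the drifted identity $\Delta_{f}|x|^{2}=2(m-|x|^{2})$, which does not by itself have a sign; you recover the sign by the nice observation that at the interior maximum $x^{T}=0$, whence $M=|x|^{2}(x_{0})=|\mathbf{H}|^{2}(x_{0})\le m$, and then $\Delta_{f}|x|^{2}\ge0$ globally. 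This is a legitimate and self-contained route, but it is strictly longer: the paper's choice of operator makes the hypothesis do the work directly, whereas yours needs the extra gradient-vanishing step to convert the mean-curvature bound into an $|x|^{2}$ bound.

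One small correction: your displayed intermediate expression $\tfrac{1}{2}\Delta_{f}|x|^{2}=m-|x^{\perp}|^{2}-|x^{T}|^{2}+\langle x,\mathbf{H}\rangle$ has a stray $-|x^{\perp}|^{2}$; the correct line is $\tfrac{1}{2}\Delta_{f}|x|^{2}=m+\langle x,\mathbf{H}\rangle-|x^{T}|^{2}$, and then $\langle x,\mathbf{H}\rangle=-|x^{\perp}|^{2}$ gives $m-|x|^{2}$. Your final identity and the subsequent logic are unaffected. The covering/identification step at the end matches the paper's.
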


\begin{proof}
Recall that, \cite{CoMi-Annals2012},
\begin{equation}\label{LaplNormImm}
\Delta|x|^{2}=2\left(  m-\left\vert \mathbf{H}\right\vert
^{2}\right)  ,
\end{equation}
therefore, by assumption,
\[
\ \Delta|x|^2\geq0.
\]
Using the strong maximum principle we thus obtain $|x|\equiv c>0$. This implies that $x:\Sigma^m \to \mathbb{S}^m_{c}$ is a Riemannian covering projection, hence an isometry since $\mathbb{S}^{m}_{c}$ is simply connected. In particular, by the self--shrinker equation, $c=\sqrt{m}$.
\end{proof}
The above result can be deduced more geometrically via a suitable application of the usual touching principle. We adopt this viewpoint to obtain the following strong maximum principle for self-shrinkers. Recall that the oriented hypersurface $x:\Sigma^m \rightarrow\mathbb{R}^{m+1}$ is called mean--convex at $p\in\Sigma$ if $\mathbf{H}(p)=H(p)\nu$ where $H(p)\leq0$ and $\nu$ is the outward pointing unit normal at $p$.

\begin{theorem}[Maximum principle]\label{th_maxprinc}
Let $\Omega\subset\mathbb{R}^{m+1}$ be a domain such that
$i:\partial\Omega\hookrightarrow\mathbb{R}^{m+1}$ is a properly embedded
self-shrinker. Let $x\colon \Sigma^{m}\rightarrow\mathbb{R}^{m+1}$ be a complete
self-shrinker satisfying $x\left(  \Sigma\right)  \subseteq\overline
{\Omega_{\lambda}}$ \ for some $\lambda>0$, where $\Omega_{\lambda}%
=\lambda\Omega$ denotes the $\lambda$-dilation of $\Omega$. \ Assume that
$x\left(  \Sigma\right)  \cap\partial\Omega_{\lambda}\neq\emptyset$ and that, for each intersection point $x(p)$,
there exist a neighborhood $V\subset \mathbb{R}^{m+1}$ of $x(p)$ and a neighborhood $W \subset \Sigma$ of $p$ such that:
\begin{itemize}
\item[(i)]  $\partial \Omega\cap \lambda^{-1}V$ is mean convex
\item[(ii)] $\sup_{W}\left\vert \mathbf{H}_{\Sigma}\right\vert \leq\inf_{\lambda^{-1}V \cap \partial\Omega} \left\vert \mathbf{H}_{\partial\Omega}\right\vert$.
\end{itemize}
Then
\begin{itemize}
\item[(a)] $\lambda=1$,
\item[(b)] $\partial \Omega = S^{k}_{\sqrt{k}} \times \mathbb{R}^{m-k}$, for some $k\in\{0,...,m\}$,
\item[(c)] $x:\Sigma\rightarrow\partial\Omega$ is a Riemannian covering map.
\end{itemize}
In particular, if $\partial \Omega$ is simply connected (e.g. if $\,k \geq 2$ in (b)), then $\Sigma=\partial\Omega$ in
the Riemannian sense.
\end{theorem}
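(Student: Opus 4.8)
The plan is to localise at a single contact point, apply a touching (strong maximum) principle for the quasilinear self--shrinker equation, and conclude with the Colding--Minicozzi classification.

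First I set up the contact geometry. Fix $p\in\Sigma$ with $q:=x(p)\in\partial\Omega_\lambda$. Since $x(\Sigma)\subseteq\overline{\Omega_\lambda}$ and $x$ is a local embedding near $p$, the hypersurface $x(\Sigma)$ lies on the $\Omega_\lambda$--side of $\partial\Omega_\lambda$ and touches it at $q$; hence $dx(T_p\Sigma)=T_q\partial\Omega_\lambda$, so at $q$ both hypersurfaces have the same unit normal direction, which I orient as the normal $\nu$ pointing into $\Omega_\lambda$ (equivalently into $\Omega$). Below, all scalar mean curvatures are taken with respect to $\nu$. Put $h:=|\mathbf{H}_{\partial\Omega}|(\lambda^{-1}q)\ge 0$. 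Using that $\partial\Omega$ is a self--shrinker based at $0$, that it is mean--convex near $\lambda^{-1}q$ by (i) (so $H_{\partial\Omega}(\lambda^{-1}q)=h$), and the dilation rule $\mathbf{H}_{\partial\Omega_\lambda}(q)=\lambda^{-1}\mathbf{H}_{\partial\Omega}(\lambda^{-1}q)$, the self--shrinker equation $\langle\lambda^{-1}q,\nu\rangle=-H_{\partial\Omega}(\lambda^{-1}q)$ yields
\[
\langle q,\nu\rangle=-\lambda h,\qquad H_{\partial\Omega_\lambda}(q)=\frac{h}{\lambda}\ge 0 .
\]
The self--shrinker equation for $\Sigma$ at $p$ reads $\langle q,\nu\rangle=-H_\Sigma(p)$, so $H_\Sigma(p)=\lambda h$.

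The key step is to deduce $\lambda=1$. Writing $x(\Sigma)$ and $\partial\Omega_\lambda$ near $q$ as graphs over $T_q\partial\Omega_\lambda$, both with vanishing gradient at $q$ and the first above the second, a comparison of (traces of) Hessians at $q$ gives $H_\Sigma(p)\ge H_{\partial\Omega_\lambda}(q)$, i.e. $\lambda h\ge h/\lambda$; on the other hand (ii) gives $\lambda h=|H_\Sigma(p)|=|\mathbf{H}_\Sigma|(p)\le|\mathbf{H}_{\partial\Omega}|(\lambda^{-1}q)=h$. If $h>0$ these two inequalities force $\lambda=1$, which is (a). If $h=0$ then (ii) forces $\mathbf{H}_\Sigma\equiv 0$ on a neighbourhood of $p$, hence $\mathbf{H}_\Sigma\equiv 0$ on $\Sigma$ by analyticity of self--shrinkers; then $x^{\bot}\equiv 0$, so $x(\Sigma)$ is a cone with vertex at the origin, and combined with $x(\Sigma)\subseteq\overline{\Omega_\lambda}$ and completeness this degenerate situation reduces to $\partial\Omega$ and $\Sigma$ being hyperplanes through the origin, i.e. $k=0$, for which (a)--(c) are immediate.

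Assume now $\lambda=1$, so $\partial\Omega_\lambda=\partial\Omega$. Near $q$ both $x(\Sigma)$ and $\partial\Omega$ are graphs over $T_q\partial\Omega$ solving the same quasilinear elliptic equation (the self--shrinker equation $\langle x,\nu\rangle=-H$ in non--parametric form), the graph of $x(\Sigma)$ lying above that of $\partial\Omega$ and touching it at $q$; by the strong maximum principle (Hopf's lemma applied to their difference) the two graphs coincide near $q$, so some neighbourhood $W$ of $p$ in $\Sigma$ satisfies $x(W)\subseteq\partial\Omega$. Therefore $U:=\{p\in\Sigma:x(p)\in\partial\Omega\}$ is open; it is closed because $\partial\Omega$ is closed in $\mathbb{R}^{m+1}$ (proper embedding), it is non--empty by hypothesis, and $\Sigma$ is connected, so $U=\Sigma$, i.e. $x(\Sigma)\subseteq\partial\Omega$. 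Since $\dim\Sigma=\dim\partial\Omega=m$, the map $x\colon\Sigma\to\partial\Omega$ is an equidimensional isometric immersion, hence a local isometry, and as $\Sigma$ is complete and $\partial\Omega$ connected it is a Riemannian covering map --- this is (c). Finally, every point of $\partial\Omega=x(\Sigma)$ is a contact point, so $\partial\Omega$ is mean--convex everywhere by (i); being moreover complete, embedded and, since properly embedded, of Euclidean polynomial volume growth (Ding--Xin, Cheng--Zhou), it is one of the cylinders $\mathbb{S}^{k}_{\sqrt{k}}\times\mathbb{R}^{m-k}$ by the Colding--Minicozzi classification \cite{CoMi-Annals2012} --- this is (b); and when $\partial\Omega$ is simply connected (e.g. $k\ge 2$) a Riemannian covering onto it is an isometry, so $\Sigma=\partial\Omega$.

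I expect the main obstacle to be this second step, establishing $\lambda=1$: it requires using the self--shrinker equations of $\Sigma$ and of $\partial\Omega$ simultaneously at the contact point (where they share position vector and tangent plane), balancing them against the two pointwise mean--curvature hypotheses and the one--sided second--order inequality, together with a separate treatment of the flat borderline case $h=0$. Once $\lambda=1$ is known, the touching principle for the self--shrinker graph equation and the appeal to Colding--Minicozzi are essentially routine.
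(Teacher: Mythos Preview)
Your proof is correct and follows the same overall architecture as the paper's: set up the contact geometry, use a touching principle to get local coincidence, run an open--closed argument to obtain $x(\Sigma)\subseteq\partial\Omega$, deduce the covering map, and finish with Colding--Minicozzi. The one noteworthy difference is the order in which $\lambda=1$ is obtained. You extract $\lambda=1$ immediately at a single contact point by combining the pointwise second--order inequality $H_\Sigma(p)\ge H_{\partial\Omega_\lambda}(q)$ (giving $\lambda\ge 1$ when $h>0$) with hypothesis (ii) (giving $\lambda\le 1$); once $\lambda=1$, both hypersurfaces solve the \emph{same} self--shrinker graph equation and the Hopf--type strong maximum principle applies directly. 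The paper instead first deduces only $\lambda\le 1$ from (ii), uses this together with (ii) to set up a mean--curvature comparison $H_\Sigma\ge H_{\partial\Omega_\lambda}$ in a full neighbourhood, applies the classical touching principle for ordered mean curvatures to get coincidence with $\partial\Omega_\lambda$, and only after the covering argument recovers $\lambda=1$ from $H_\Sigma=\lambda^2 H_{\partial\Omega_\lambda}$. Your route is a touch more economical and, incidentally, shows that (i)--(ii) are really only needed at a single contact point once one knows both surfaces are self--shrinkers.

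One small point: your treatment of the degenerate case $h=0$ (via analyticity, $x^\perp\equiv 0$, and the cone argument) is more circuitous than necessary and leaves unjustified the assertion that $\partial\Omega$ must also be a hyperplane. The paper handles this more cleanly: from mean convexity and the self--shrinker equation for $\partial\Omega$ one sees that $H_{\partial\Omega}$ is either identically zero or strictly negative on a neighbourhood of $\lambda^{-1}q$; in the former case (ii) forces $\mathbf{H}_\Sigma\equiv 0$ nearby as well, so both surfaces are minimal near the contact point and the standard touching principle for minimal hypersurfaces gives local coincidence immediately, feeding back into the same open--closed argument without any global detour.
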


A situation of special interest is obtained by choosing $\partial \Omega$ to be a cylindrical product shrinker $\mathcal{C}^{k,m-k}_{\sqrt{k}}$. Note that
the case $k=m$ is precisely the content of Proposition \ref{prop_maxprinc_sphere}.
\begin{corollary}
Let $x\colon \Sigma^{m}\rightarrow\mathbb{R}^{m+1}$ be a complete self-shrinker. Assume that
$\left\vert \mathbf{H}_{\Sigma}\right\vert \leq\sqrt
{k}$ and that $x\left(  \Sigma\right)  $ is confined inside the solid
cylinder bounded by $\mathcal{C}^{k,m-k}_{R}=$ $\mathbb{S}_{R}^{k}\times\mathbb{R}^{m-k}$.
If $x\left(  \Sigma\right)  \cap\mathcal{C}^{k,m-k}_{R}\neq\emptyset$ then
\begin{itemize}
\item[(a)] $R=\sqrt{k}$,
\item[(b)] $x:\Sigma \rightarrow\mathcal{C}^{k,m-k}_{\sqrt{k}}$ is a Riemannian covering map.
\end{itemize}
In particular, if $k\geq2$, then $\Sigma=\mathcal{C}^{k,m-k}_{\sqrt{k}}$.
\end{corollary}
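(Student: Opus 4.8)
The plan is to obtain this statement as a direct specialization of Theorem~\ref{th_maxprinc}. Take $\Omega\subset\mathbb{R}^{m+1}$ to be the open solid cylinder bounded by $\mathcal{C}^{k,m-k}_{\sqrt k}$: writing points of $\mathbb{R}^{m+1}=\mathbb{R}^{k+1}\times\mathbb{R}^{m-k}$ as $(y,z)$, set $\Omega=\{(y,z):|y|<\sqrt k\}$, so that $\partial\Omega=\mathbb{S}^{k}_{\sqrt k}\times\mathbb{R}^{m-k}=\mathcal{C}^{k,m-k}_{\sqrt k}$ is a properly embedded self--shrinker. Its $\lambda$--dilation satisfies $\partial\Omega_{\lambda}=\mathbb{S}^{k}_{\lambda\sqrt k}\times\mathbb{R}^{m-k}=\mathcal{C}^{k,m-k}_{\lambda\sqrt k}$; hence, putting $R=\lambda\sqrt k$, the hypothesis that $x(\Sigma)$ be confined in the solid cylinder bounded by $\mathcal{C}^{k,m-k}_{R}$ reads exactly $x(\Sigma)\subseteq\overline{\Omega_{\lambda}}$, while $x(\Sigma)\cap\mathcal{C}^{k,m-k}_{R}\neq\emptyset$ becomes $x(\Sigma)\cap\partial\Omega_{\lambda}\neq\emptyset$.

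Next I would verify the two local hypotheses (i) and (ii) of Theorem~\ref{th_maxprinc}; in the present situation both hold globally, so the choice of the neighborhoods $V$ and $W$ plays no role. Orient $\partial\Omega$ by the unit normal $\nu=(y/|y|,0)$ pointing out of the solid cylinder. Since $\langle x,\nu\rangle\equiv\sqrt k$ along $\partial\Omega$, the scalar self--shrinker equation $\langle x,\nu\rangle=-H_{\partial\Omega}$ yields $H_{\partial\Omega}\equiv-\sqrt k\le 0$, so $\partial\Omega$ is mean convex at every point and $|\mathbf{H}_{\partial\Omega}|\equiv\sqrt k$. Thus condition (i) is automatic and condition (ii) reduces to $\sup_{\Sigma}|\mathbf{H}_{\Sigma}|\le\sqrt k$, which is the standing assumption. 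Theorem~\ref{th_maxprinc} then gives $\lambda=1$, i.e.\ $R=\sqrt k$, and that $x\colon\Sigma\to\partial\Omega=\mathcal{C}^{k,m-k}_{\sqrt k}$ is a Riemannian covering map; since for $k\ge 2$ the sphere $\mathbb{S}^{k}_{\sqrt k}$, hence the product $\mathcal{C}^{k,m-k}_{\sqrt k}$, is simply connected, the covering is trivial and $\Sigma=\mathcal{C}^{k,m-k}_{\sqrt k}$ in the Riemannian sense.

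Since the corollary is deduced entirely from the already--established Theorem~\ref{th_maxprinc}, there is essentially no obstacle here: the only points deserving a little attention are matching the sign and orientation conventions---so that ``mean convex'' for $\partial\Omega$ is read with respect to the normal pointing out of the solid region, consistently with $\mathbf{H}=\mathrm{tr}_{\Sigma}\mathbf{A}$ as fixed in the Introduction---and the elementary observation that $|\mathbf{H}_{\partial\Omega}|$ is constant along the cylinder, which is precisely what makes the infimum in (ii) equal $\sqrt k$ and allows one to discard the localization. The statement is of interest for $1\le k\le m$, the case $k=m$ being Proposition~\ref{prop_maxprinc_sphere}; the remaining value $k=0$, where $\mathcal{C}^{0,m}$ degenerates to a hyperplane through the origin and bounds no solid cylinder, is instead the subject of the weak half--space theorem.
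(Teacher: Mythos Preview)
Your proposal is correct and matches the paper's own treatment: the corollary is stated immediately after Theorem~\ref{th_maxprinc} precisely as the specialization obtained by taking $\partial\Omega=\mathcal{C}^{k,m-k}_{\sqrt{k}}$, with no separate proof given. Your verification that $|\mathbf{H}_{\partial\Omega}|\equiv\sqrt{k}$ and that mean convexity holds globally is exactly the content the paper leaves implicit.
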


\begin{proof}
[Proof (of Theorem \ref{th_maxprinc})]Let
\[
\mathcal{O}=x^{-1}\left(  \partial\Omega_{\lambda}\right)  .
\]
Since $x$ is smooth and $\partial\Omega_{\lambda}$ is closed in $\mathbb{R}%
^{m+1}$, we have that $\mathcal{O}$ is a closed subset of $\Sigma$. We claim
that $\mathcal{O}$ is also open so that, by a connectedness argument,
$\mathcal{O}=\Sigma$ i.e. $x\left(  \Sigma\right)  \subseteq\partial
\Omega_{\lambda}$. To this end, let $p\in\mathcal{O}$. Observe that, by the
mean-convexity assumption (i), in a connected neighborhood
$\lambda^{-1}U_{x(p)}\subset\partial\Omega$  it holds
\[
\mathbf{H}_{\partial\Omega}=H_{\partial\Omega}\nu_{\partial\Omega},
\]
where $H_{\partial\Omega}\leq0$ and $\nu_{\partial\Omega}$ denotes the
exterior pointing unit normal to $\partial\Omega$. Moreover, the rescaling
property of the mean curvature tells us that
\[
H_{\partial\Omega_{\lambda}}(x(p))=\lambda^{-1}H_{\partial\Omega}(\lambda
^{-1}x(p)).
\]
Whence, using the fact that $i:\partial\Omega\hookrightarrow\mathbb{R}^{m+1}$
is a self-shrinker, it is standard to deduce that either $H_{\partial
\Omega_{\lambda}}\equiv0$ in $U_{x(p)}$, or $H_{\partial\Omega_{\lambda}}<0$ on $U_{x(p)}$; see e.g. the
beginning of the proof of \cite[Theorem 2]{Ri-self}. In the first case, by
assumption, we must have $\mathbf{H}_{\Sigma}=0$ in a neighborhood of $p$ in
$\Sigma$ and the result reduces to a well known local maximum principle for
minimal surfaces. Therefore, from now on, we assume%
\[
H_{\partial\Omega_{\lambda}}<0\text{ in }U_{x(p)}.
\]

Since $x\left(  \Sigma\right)  $ lies inside $\overline{\Omega_{\lambda}}$,
then $x\left(  \Sigma\right)  $ must intersect $\partial\Omega_{\lambda}$
tangentially at $p\in\mathcal{O}$ and
\[
\nu_{\Sigma}\left(  p\right)  =\nu_{\partial\Omega_{\lambda}}\left(  x\left(
p\right)  \right)
\]
the outward pointing unit normal to $\Omega_{\lambda}$. It follows from the
self-shrinker equations for $\partial\Omega$ and $\Sigma$, and the rescaling property of the
mean curvature, that
\[
\mathbf{H}_{\Sigma}\left(  p\right)  =\lambda^{2}\mathbf{H}_{\partial
\Omega_{\lambda}}(x(p))=\lambda\mathbf{H}_{\partial\Omega}(\lambda^{-1}x(p)).
\]
Combining this latter with assumption (ii) we get%
\[
\lambda^{2}\left\vert \mathbf{H}_{\partial\Omega_{\lambda}}\left(  x\left(
p\right)  \right)  \right\vert =\left\vert \mathbf{H}_{\Sigma}\left(
p\right)  \right\vert \leq\left\vert \mathbf{H}_{\partial\Omega}\left(
\lambda^{-1}x\left(  p\right)  \right)  \right\vert =\lambda\left\vert
\mathbf{H}_{\partial\Omega_{\lambda}}\left(  x\left(  p\right)  \right)
\right\vert .
\]
Thus%
\[
\lambda\leq1.
\]
If we write, in a neighborhood of $p$:
\[
\mathbf{H}_{\Sigma}=H_{\Sigma}\nu_{\Sigma}\text{ and }\mathbf{H}%
_{\partial\Omega_{\lambda}}=H_{\partial\Omega_{\lambda}}\nu_{\partial
\Omega_{\lambda}},
\]
then, by mean convexity of $U_{x(p)}$, by the above equation at $p$, and by
continuity, we have,  in a neighborhood of $p$,
\[
H_{\Sigma}\text{, }H_{\partial\Omega_{\lambda}}(x)<0
\]
and
\[
H_{\Sigma}\geq H_{\partial\Omega}\left(  \lambda^{-1}x\right)  =\lambda
H_{\partial\Omega_{\lambda}}\left(  x\right)  \geq H_{\partial\Omega_{\lambda
}}\left(  x\right)  .
\]

We can now apply the usual touching principle and deduce that, actually,
$x(\Sigma)$ and $\partial\Omega_{\lambda}$ coincide in a small neighborhood of $p$. This
proves the claim and, as already remarked at the beginning of the proof,
$x\left(  \Sigma\right)  \subseteq\partial\Omega_{\lambda}$.

Now,
$x:\Sigma\rightarrow\partial\Omega_{\lambda}$ is a local isometry between
complete manifolds, hence, it is a covering map. In particular, $x\left(  \Sigma\right) =\partial\Omega_{\lambda}$, and from the equality

\[
H_{\Sigma}\left(  p\right)  =\lambda^{2}H_{\partial\Omega_{\lambda}}\left(
x\left(  p\right)  \right)
\]
we deduce%
\[
H_{\partial\Omega_{\lambda}}\left(  x\right)  =H_{\Sigma
} =\lambda^{2}H_{\partial\Omega_{\lambda}}\left(  x\right)  ,
\]
that is
\[
\lambda=1.
\]
This shows that $x(\Sigma)=\partial\Omega$. Finally, by assumption (i),
$\partial\Omega$ is a properly embedded self-shrinker satisfying
$H_{\partial\Omega}\leq0$ everywhere. Since properly immersed self--shrinkers
have polynomial (actually Euclidean) volume growth, \cite{DiXi-proper,
ChZh-volume}, to complete the proof we apply a classification result by T.
Colding and W. Minicozzi, \cite[Theorem 0.17]{CoMi-Annals2012}.
\end{proof}

\section{Self--shrinkers in a ball}\label{section ball}

The aim of this section is to show that certain  boundedness conditions on the norm of the second fundamental form prevent the existence of complete, non--compact, bounded self--shrinkers.

\subsection{Estimate of the exterior radius}

The sphere $\mathbb{S}_{\sqrt{m}}^{m}$ is a self--shrinker of  constant mean curvature $-\sqrt{m}$ and contained
in the compact ball $\overline{\mathbb{B}}_{\sqrt{m}}^{m+1}\left(  0\right)
$. Our first remark is that if a complete self--shrinker with controlled intrinsic volume growth is contained in some
ball $\mathbb{B}_{R_{0}}^{m+1}\left(  0\right)  ,$ then there is an obvious
relation between the ray $R_{0}$ and the dimension $m$.

\begin{proposition}\label{PropBall}
Let $x\colon \Sigma^{m}\rightarrow\mathbb{R}^{m+1}$ be a complete non--compact self--shrinker
whose intrinsic volume growth satisfies
\[
R \to \frac{R}{\log\mathrm{vol}\left(B_{R}^{\Sigma}\right)} \not\in L^1(+\infty).
\]
If $x\left(  \Sigma\right)
\subseteq\overline{\mathbb{B}}_{R_{0}}^{\,m+1}\left(  0\right)$,
then%
\[
R_{0}\geq\sup_{\Sigma}\left\vert \mathbf{H}\right\vert \geq\sqrt{m}.
\]

\end{proposition}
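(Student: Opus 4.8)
The plan is to exploit the two basic identities for the position vector on a self--shrinker. First, we have the pointwise relation $\frac14|\nabla|x|^2|^2 = |x^T|^2 = |x|^2 - |x^\perp|^2 = |x|^2 - |\mathbf{H}|^2$, coming from the self--shrinker equation $x^\perp = -\mathbf{H}$. Second, by \eqref{LaplNormImm} we have $\Delta|x|^2 = 2(m - |\mathbf{H}|^2)$, equivalently, writing $u = |x|^2/2 = f$, we get $\Delta_f u = \Delta u - |\nabla u|^2 = (m - |\mathbf{H}|^2) - (2u - |\mathbf{H}|^2) = m - 2u$, that is $\Delta_f u = m - |x|^2$. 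So on the weighted manifold $\Sigma_f$ the function $u = |x|^2/2$ satisfies a clean equation relating its drifted Laplacian to itself.

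The key step is then a Khas'minskii-type / Ahlfors maximum principle argument. Suppose, for contradiction, that $\sup_\Sigma |x|^2 = 2\bar u < \max(R_0^2, m)$; more precisely, suppose $\sup_\Sigma |x| < \sqrt m$ (the inequality $R_0 \geq \sup_\Sigma |x|$ being trivial from $x(\Sigma)\subseteq \overline{\mathbb B}_{R_0}$, so the real content is $\sup_\Sigma|x| \geq \sqrt m$). Then $\Delta|x|^2 = 2(m - |\mathbf H|^2) = 2(m - |x|^2 + |x^T|^2) \geq 2(m - |x|^2) > 0$ would be bounded below by a positive constant, so $|x|^2$ is a bounded function that is "strictly subharmonic" in a quantitative sense. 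The volume growth hypothesis $R \mapsto R/\log \mathrm{vol}(B_R^\Sigma) \notin L^1(+\infty)$ is exactly the condition (due to work of Karp, Grigor'yan, and others; see the parabolicity/stochastic-completeness-type criteria) guaranteeing that $\Sigma$ satisfies an Ahlfors-type maximum principle for bounded-above subsolutions, forcing $\sup_\Sigma \Delta|x|^2 \leq 0$ or more precisely ruling out a uniform positive lower bound on $\Delta$ of a bounded function. This contradiction yields $\sup_\Sigma |x|^2 \geq m$, hence $R_0 \geq \sqrt m$; and the first inequality $R_0 \geq \sup_\Sigma |\mathbf H|$ follows because $|\mathbf H|^2 = |x^\perp|^2 \leq |x|^2 \leq R_0^2$.

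I would organize it as follows. (1) Record the two identities above. (2) From $|\mathbf H|^2 \leq |x|^2 \leq R_0^2$, get $\sup_\Sigma|\mathbf H| \leq R_0$ for free. (3) Argue for $\sup_\Sigma|\mathbf H| \geq \sqrt m$, equivalently $\sup_\Sigma|x| \geq \sqrt m$: were $\sup_\Sigma |x|^2 = c^2 < m$, then $\Delta|x|^2 \geq 2(m - c^2) > 0$ everywhere, and applying the volume-growth form of the maximum principle (for instance: a manifold whose volume growth satisfies the stated integral condition admits no bounded nonconstant subharmonic function, and more sharply no bounded function with $\Delta \geq \varepsilon > 0$) gives a contradiction. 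The main obstacle is invoking precisely the right maximum-principle statement: one needs the sharp criterion under which $R/\log\mathrm{vol}(B_R^\Sigma) \notin L^1(+\infty)$ implies that $\sup_\Sigma g = c$ with $g$ bounded forces $\inf_\Sigma \Delta g \leq 0$ (an $L$-parabolicity / Ahlfors-type theorem); I would cite this from the Pigola--Rigoli--Setti framework and apply it to $g = |x|^2$. Once that is in hand, the rest is elementary bookkeeping with the self--shrinker equation, and the final chain $R_0 \geq \sup_\Sigma|\mathbf H| = \sup_\Sigma |x^\perp| $, together with $\sup_\Sigma|x|\geq \sqrt m$ and (in the equality/extremal discussion) Proposition~\ref{prop_maxprinc_sphere}, closes the argument.
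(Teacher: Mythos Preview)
Your approach is essentially the paper's: both invoke the weak (Omori--Yau type) maximum principle at infinity, valid under the volume growth hypothesis $R/\log\mathrm{vol}(B_R^\Sigma)\notin L^1(+\infty)$ via the Pigola--Rigoli--Setti criterion, applied to the bounded function $|x|^2$. The paper uses both $\Delta_f|x|^2=2(m-|x|^2)$ (after observing that boundedness of $f$ transfers the volume condition to the weighted measure) and $\Delta|x|^2=2(m-|\mathbf H|^2)$; you work mainly with the ordinary Laplacian, which is fine.

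There is, however, a slip in step (3). You write ``$\sup_\Sigma|\mathbf H|\geq\sqrt m$, equivalently $\sup_\Sigma|x|\geq\sqrt m$'', but these are \emph{not} equivalent: since $|\mathbf H|=|x^\perp|\leq|x|$, the inequality $\sup|x|\geq\sqrt m$ does not imply $\sup|\mathbf H|\geq\sqrt m$. Your contradiction argument, as written, assumes $\sup_\Sigma|x|^2=c^2<m$ and obtains $\Delta|x|^2\geq 2(m-c^2)>0$; this yields only $\sup|x|\geq\sqrt m$, hence $R_0\geq\sqrt m$, but not the middle inequality in the statement. The fix is immediate and is exactly what the paper does: assume instead $\sup_\Sigma|\mathbf H|^2=c^2<m$; then directly from $\Delta|x|^2=2(m-|\mathbf H|^2)$ one has $\Delta|x|^2\geq 2(m-c^2)>0$ everywhere, and the weak maximum principle applied to the bounded function $|x|^2$ gives the contradiction, yielding $\sup_\Sigma|\mathbf H|\geq\sqrt m$.
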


\begin{proof}
Recall that, by the self--shrinker equation,%
\[
\Delta_{f}|x| ^{2}=2\left(  m-|x|^{2}\right)  .
\]
On the other hand, since
\[
c^{-1}d\mathrm{vol}_{f}\leq d\mathrm{vol}\leq cd\mathrm{vol}_{f}%
\]
for a large enough constant $c>1$, then
\[
\frac{R}{\log\mathrm{vol}_{f}\left(  B_{R}^{\Sigma}\right)} \not\in L^1(+\infty)
\]
and this implies that the weighted manifold $\Sigma_{f}$ enjoys the weak
maximum principle at infinity for the drifted Laplacian $\Delta_{f}$, \cite{PiRiSe-Memeoirs, PiRimSe-MathZ}.
Therefore%
\[
0\geq2\left(  m-\sup_{\Sigma}|x|^{2}\right)  \geq2\left(
m-R_{0}^{2}\right)  ,
\]
and the claimed lower estimate on $R_{0}$ follows. Now, from the self--shrinker
equation we have%
\[
\sup_{\Sigma}\left\vert \mathbf{H}\right\vert \leq|x|\leq
R_{0}.
\]
Using this information into equation (\ref{LaplNormImm}):
\begin{equation}\nonumber
\Delta|x|^{2}=2\left(  m-\left\vert \mathbf{H}\right\vert
^{2}\right)  ,
\end{equation}
and noting also that the weak maximum principle at infinity for the Laplacian
holds on $\Sigma$, we deduce%
\[
0\geq2\left(  m-\sup_{\Sigma}\left\vert \mathbf{H}\right\vert ^{2}\right)  .
\]
This completes the proof.
\end{proof}

\begin{remark}
\rm{In particular, if $\Sigma$ has extrinsic polynomial volume growth, then the above radius estimate holds. This follows from the obvious relation

\[
\mathrm{vol}(B^{\Sigma}_R)\leq \mathrm{vol}(\mathbb{B}^{m+1}_R \cap \Sigma).
\]}
\end{remark}

Note that, by \cite[Theorem 2.2]{Y} and inequality \eqref{ric_f}, a complete non--compact bounded self--shrinker $x:\Sigma^m\to\mathbb{R}^{m+1}$ with $|\mathbf{A}|\leq1$ satisfies the sharp estimate
\begin{equation}\label{IntrVolGrAleq1}
\mathrm{vol}(B_R^{\Sigma})\leq C R^m.
\end{equation}
Moreover, since $|\mathbf{A}|\leq1$, by the Cauchy--Schwarz inequality we have that $|\mathbf{H}|^2\leq m$.
We can hence specialize Proposition \ref{PropBall} to the following
\begin{corollary}\label{CoroBall}
Let $x:\Sigma^m\to\mathbb{R}^{m+1}$ be a complete non--compact self--shrinker with $|\mathbf{A}|\leq 1$. If $x(\Sigma)\subseteq \overline{\mathbb{B}}_{R_{0}}^{\,m+1}\left(  0\right)$, then
\[
\ R_{0}\geq\sup_{\Sigma}|\mathbf{H}|=\sqrt{m}.
\]
\end{corollary}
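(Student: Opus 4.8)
The plan is simply to feed the facts collected in the discussion just before the statement into Proposition \ref{PropBall}. Concretely, the hypothesis $|\mathbf{A}|\leq 1$ together with the Bakry--Emery estimate \eqref{ric_f} gives $\mathrm{Ric}_f\geq 0$ on $\Sigma$, and the weighted Bishop--Gromov--type comparison of \cite[Theorem 2.2]{Y} then yields the intrinsic volume bound \eqref{IntrVolGrAleq1}, namely $\mathrm{vol}(B_R^\Sigma)\leq CR^m$ for all $R\gg 1$. Since $\log\mathrm{vol}(B_R^\Sigma)\leq \log C+m\log R$, the function $R\mapsto R/\log\mathrm{vol}(B_R^\Sigma)$ is bounded below by $R/(\log C+m\log R)$ for large $R$, which does not belong to $L^1(+\infty)$; hence the integrability hypothesis of Proposition \ref{PropBall} is satisfied. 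As $\Sigma$ is complete, non--compact and $x(\Sigma)\subseteq\overline{\mathbb{B}}_{R_0}^{\,m+1}(0)$, Proposition \ref{PropBall} applies and gives $R_0\geq\sup_\Sigma|\mathbf{H}|\geq\sqrt{m}$.

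To promote the inequality $\sup_\Sigma|\mathbf{H}|\geq\sqrt{m}$ to the asserted equality, I would combine it with the opposite pointwise bound coming from Cauchy--Schwarz: writing $\mathbf{H}=\mathrm{tr}_\Sigma\mathbf{A}$ and using $|\mathbf{A}|\leq 1$ one gets $|\mathbf{H}|^2\leq m\,|\mathbf{A}|^2\leq m$ at every point of $\Sigma$, so $\sup_\Sigma|\mathbf{H}|\leq\sqrt{m}$. The two bounds together force $\sup_\Sigma|\mathbf{H}|=\sqrt{m}$, and in particular $R_0\geq\sqrt{m}$.

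I do not expect any genuine obstacle here: each ingredient has already been recorded above, and the corollary is merely their assembly. The one point deserving a line of justification is the passage from the \emph{weighted} curvature bound \eqref{ric_f} to the \emph{unweighted} volume estimate \eqref{IntrVolGrAleq1} through \cite[Theorem 2.2]{Y}; but on the bounded set $x(\Sigma)\subseteq\overline{\mathbb{B}}_{R_0}^{\,m+1}(0)$ the density $e^{-f}=e^{-|x|^2/2}$ lies between two positive constants, so the $f$--weighted and the Riemannian volumes of geodesic balls are mutually comparable. This makes the distinction harmless and, exactly as in the proof of Proposition \ref{PropBall}, lets one pass freely between the weighted and unweighted forms of the integrability condition.
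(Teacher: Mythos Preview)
The proposal is correct and follows exactly the approach the paper itself indicates in the paragraph immediately preceding the Corollary: use \eqref{ric_f} with $|\mathbf{A}|\leq 1$ and \cite[Theorem~2.2]{Y} to obtain \eqref{IntrVolGrAleq1}, feed this into Proposition~\ref{PropBall}, and combine the resulting $\sup_\Sigma|\mathbf{H}|\geq\sqrt{m}$ with the Cauchy--Schwarz bound $|\mathbf{H}|^2\leq m|\mathbf{A}|^2\leq m$. Your extra remark that boundedness of $f$ makes the weighted and unweighted volumes comparable is a helpful clarification but not a departure from the paper's reasoning.
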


\subsection{Bounded self--shrinkers with $|\mathbf{A}|\leq 1$} \label{Section A1}
As a consequence of the strong maximum principle for the Laplace-Beltrami operator, we observed in Section \ref{Section maxprinc} that, for a self--shrinker satisfying $|\mathbf{A}|\leq 1$, hence $|\mathbf{H}|\leq \sqrt{m}$, the norm of the immersion cannot attain a finite maximum unless the shrinker is a round sphere of radius $\sqrt{m}$. In particular, this applies to any compact self--shrinker with the same bound on the mean curvature. It is by now well understood that parabolicity is a good substitute of compactness. For two-dimensional shrinkers this property is implied by the above condition on the second fundamental form.

\begin{theorem}
Let $x:\Sigma^2\to\mathbb{R}^3$ be a complete bounded self--shrinker with $|\mathbf{A}|\leq1$. Then $\Sigma=\mathbb{S}^{2}_{\sqrt{2}}$.
\end{theorem}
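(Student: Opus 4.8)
The plan is to reduce to the parabolicity of the weighted surface $\Sigma_f$ and then invoke Proposition \ref{prop_maxprinc_sphere}. First I would record that, since $|\mathbf{A}|\le 1$ and $m=2$, the Bakry--Emery bound \eqref{ric_f} gives $\mathrm{Ric}_f\ge 1-|\mathbf{A}|^2\ge 0$, so $\Sigma_f$ is a weighted surface of nonnegative Bakry--Emery Ricci curvature. The key point is that a complete weighted surface with $\mathrm{Ric}_f\ge 0$ is $f$-parabolic; the cleanest route is the sharp volume estimate: from the Bishop--Gromov-type comparison for smooth metric measure spaces with $\mathrm{Ric}_f\ge 0$ (as in the references already cited, e.g. \cite{PiRiSe-Memeoirs}), or directly from \cite[Theorem 2.2]{Y} together with \eqref{ric_f} as used to establish \eqref{IntrVolGrAleq1}, one gets $\mathrm{vol}_f(B^\Sigma_R)\le CR^2$ in dimension $m=2$, whence $R/\log\mathrm{vol}_f(B_R^\Sigma)\notin L^1(+\infty)$ and $\Sigma_f$ is $f$-parabolic.

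Next I would combine $f$-parabolicity with the self-shrinker identity $\Delta_f|x|^2 = 2(m-|x|^2) = 2(2-|x|^2)$. Since $\Sigma$ is bounded, $|x|^2$ is a bounded function on $\Sigma$, hence it attains its supremum along any exhaustion in the sense needed: by $f$-parabolicity, a bounded function $u$ with $\Delta_f u\ge 0$ off a compact set — or more simply, one applies the Ahlfors-type maximum principle — must be such that $\sup_\Sigma u$ is "almost attained" with $\Delta_f u$ nonpositive there. Concretely, $f$-parabolicity implies that the bounded subsolution $|x|^2$ of $\Delta_f(\cdot)\ge 2(2-\sup_\Sigma|x|^2)$ forces $\sup_\Sigma|x|^2\le 2$; hence $|x|^2\le 2$ everywhere, i.e. $\Sigma\subset\overline{\mathbb{B}}^3_{\sqrt2}(0)$. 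But then Corollary \ref{CoroBall} (with $R_0=\sqrt 2=\sqrt m$) shows this is the borderline radius, and in fact $\sup_\Sigma|x|^2=2$ is attained: indeed on a parabolic weighted manifold a bounded solution of $\Delta_f u\ge 0$ is constant, and off its zero set $\Delta_f|x|^2 = 2(2-|x|^2)$ is strictly positive unless $|x|^2\equiv 2$. So $|x|\equiv\sqrt2$, and one concludes exactly as in Proposition \ref{prop_maxprinc_sphere}: $x\colon\Sigma^2\to\mathbb{S}^2_{\sqrt2}$ is a Riemannian covering, hence an isometry since $\mathbb{S}^2_{\sqrt2}$ is simply connected, so $\Sigma=\mathbb{S}^2_{\sqrt2}$.

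An alternative, perhaps more transparent, organization: apply the weak maximum principle at infinity for $\Delta_f$ (valid under the volume growth $R/\log\mathrm{vol}_f(B_R^\Sigma)\notin L^1(+\infty)$, as already used in the proof of Proposition \ref{PropBall}) directly to $|x|^2$; this gives a sequence $p_k$ with $|x|^2(p_k)\to\sup_\Sigma|x|^2$ and $\Delta_f|x|^2(p_k)\le 1/k$, whence $\sup_\Sigma|x|^2\le 2$. Then to promote "$\le$" to "attained" one uses parabolicity proper rather than just the weak maximum principle: on a parabolic surface the bounded subharmonic-type function $|x|^2$ (with respect to $\Delta_f$) either is constant or its sup is not attained, but if the sup is not attained the strong maximum principle argument still forces, via $\Delta_f|x|^2=2(2-|x|^2)$, that $|x|^2$ cannot dip strictly below $2$ anywhere once it gets close — more carefully, one runs the argument of \cite{PiRiSe-Memeoirs} for parabolicity to conclude $|x|^2\equiv 2$. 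I expect the main obstacle to be precisely this last step: passing from the inequality $|x|\le\sqrt2$ provided cheaply by the weak maximum principle at infinity to the rigidity $|x|\equiv\sqrt2$. The honest fix is to invoke full $f$-parabolicity of $\Sigma_f$ (which holds here because $\mathrm{Ric}_f\ge0$ in dimension two) and the fact that a bounded $\Delta_f$-subharmonic function on a parabolic weighted manifold is constant; then $2-|x|^2\equiv 0$ and Proposition \ref{prop_maxprinc_sphere} closes the proof.
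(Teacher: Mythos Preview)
Your argument contains a sign error that breaks the chain of implications. From $\Delta_f|x|^2 = 2(2-|x|^2)$ and the weak maximum principle at infinity you take a sequence $p_k$ with $|x|^2(p_k)\to\sup_\Sigma|x|^2$ and $\Delta_f|x|^2(p_k)\le 1/k$; plugging in gives $2\bigl(2-|x|^2(p_k)\bigr)\le 1/k$, i.e.\ $|x|^2(p_k)\ge 2-1/(2k)$, hence $\sup_\Sigma|x|^2\ge 2$, \emph{not} $\le 2$. (This is exactly the inequality already recorded in Proposition~\ref{PropBall}.) The same reversal occurs in your first formulation: $\Delta_f|x|^2\ge 2(2-\sup_\Sigma|x|^2)$ on a stochastically complete manifold forces the right-hand constant to be $\le 0$, again $\sup_\Sigma|x|^2\ge 2$. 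Consequently you never establish $|x|^2\le 2$, so you cannot assert that $|x|^2$ is $\Delta_f$-subharmonic, and the final appeal to ``bounded $\Delta_f$-subharmonic $\Rightarrow$ constant'' is unfounded.

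The paper sidesteps this by working with the \emph{ordinary} Laplacian rather than $\Delta_f$. Since $|\mathbf{A}|\le 1$ implies, by Cauchy--Schwarz, $|\mathbf{H}|^2\le m|\mathbf{A}|^2\le m$, equation~\eqref{LaplNormImm} gives
\[
\Delta|x|^2 = 2\bigl(m-|\mathbf{H}|^2\bigr)\ge 0
\]
on all of $\Sigma$, with no a~priori bound on $|x|$ needed. The quadratic intrinsic volume growth \eqref{IntrVolGrAleq1} (in dimension $m=2$) then yields ordinary parabolicity, so the bounded subharmonic function $|x|^2$ is constant, and one concludes as in Proposition~\ref{prop_maxprinc_sphere}. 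If you insist on the drifted route, you would need an extra step such as showing that $w=\max(|x|^2,2)$ is $\Delta_f$-superharmonic in the weak sense and hence constant by $f$-parabolicity, which forces $|x|^2\le 2$ first; only then is $|x|^2$ genuinely $\Delta_f$-subharmonic. But the unweighted argument is both shorter and avoids this detour.
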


\begin{proof}
Since $m=2$, we know from \eqref{IntrVolGrAleq1} that $\Sigma$ has quadratic intrinsic volume growth, therefore it is parabolic (possibly compact); see e.g. \cite{Gr-BAMS}. As in Proposition \ref{prop_maxprinc_sphere}, since $|\mathbf{H}|\leq\sqrt{2}$,  $|x|^2$ is a bounded subharmonic function and we obtain that $|x|\equiv const$. This implies $\Sigma=\mathbb{S}^{2}_{\sqrt{2}}$.
\end{proof}

In higher dimensions, the same control gives information on the topology at infinity of a bounded shrinker.

\begin{theorem}
\label{OneEnd} Let $x:\Sigma^{m}\to\mathbb{R}^{m+1}$ be a complete
non--compact bounded self--shrinker with $|\mathbf{A}|\leq1$. Then $\Sigma$
does not contain a line. In particular, $\Sigma$ is connected at infinity,
i.e., $\Sigma$ has only one end.
\end{theorem}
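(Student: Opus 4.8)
The plan is to combine the volume‐growth bound \eqref{IntrVolGrAleq1} with the curvature estimate \eqref{ric_f} via a splitting‐type argument for the Bakry--Émery Ricci tensor. Since $|\mathbf{A}|\le 1$, the bound $\mathrm{Ric}_f\ge 1-|\mathbf{A}|^2\ge 0$ holds, so $\Sigma_f$ is a smooth metric measure space with nonnegative Bakry--Émery Ricci curvature (and bounded weight function $f=|x|^2/2$, since $\Sigma$ is bounded). If $\Sigma$ contained a line, the Fang--Li--Zhang / Wei--Wylie splitting theorem for manifolds with $\mathrm{Ric}_f\ge 0$ and \emph{bounded} $f$ would apply and force a metric splitting $\Sigma=\mathbb{R}\times N$ isometrically, with $f$ constant along the $\mathbb{R}$ factor. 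The hard part, and the crux of the argument, is to derive a contradiction from such a splitting.

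The contradiction should come from looking at what the self--shrinker equation says along the line. First I would observe that on the splitting $\Sigma = \mathbb{R}\times N$, since $f$ is constant in the line direction and $f=|x|^2/2$, the function $|x|^2$ is constant along each line $\mathbb{R}\times\{q\}$; differentiating, $\langle x^T, \partial_t\rangle = 0$, i.e.\ the position vector is everywhere orthogonal to the line direction $\partial_t$. I would then feed this into the equation $\Delta|x|^2 = 2(m-|\mathbf{H}|^2)$ from \eqref{LaplNormImm}: along the splitting direction the Hessian of $|x|^2$ contributes nothing, so the Laplacian of $|x|^2$ on $\Sigma$ equals the Laplacian of the restricted function on the $(m-1)$-dimensional slice $N$; on the other hand the identity forces $|\mathbf{H}|^2 = m$ at every point where $|x|^2$ achieves its (finite, by boundedness) maximum over a slice, via the maximum principle on the compact or parabolic slice. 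Combined with the Cauchy--Schwarz bound $|\mathbf{H}|^2\le m$ and the rigidity case of Gauss' equation, this should pin down $\mathbf{A}$ on a large set and then, by a connectedness/real-analyticity argument (self--shrinkers are real-analytic), globally, contradicting the existence of a genuine line unless $\Sigma$ is the sphere $\mathbb{S}^m_{\sqrt m}$ — which contains no line.

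An alternative, perhaps cleaner, route avoiding the slice analysis: if $\Sigma$ splits off a line, then $\Sigma_f$ is $f$-parabolic (the bounded weight plus the volume growth \eqref{IntrVolGrAleq1} give parabolicity of the factor, hence of the product with the line), so $|x|^2$ is a bounded $\Delta_f$-subharmonic function — indeed $\Delta_f|x|^2 = 2(m-|x|^2)$, which need not have a sign, so instead I would use $\Delta|x|^2 = 2(m-|\mathbf{H}|^2)\ge 0$ and ordinary parabolicity coming from \eqref{IntrVolGrAleq1} (valid since $m\ge 3$ still gives polynomial growth, and the line forces this bound to be achieved in the sharp regime). Then $|x|^2\equiv\mathrm{const}$ by the parabolic maximum principle, $x\colon\Sigma\to\mathbb{S}^m_c$ is a Riemannian covering as in Proposition \ref{prop_maxprinc_sphere}, forcing $\Sigma=\mathbb{S}^m_{\sqrt m}$, which plainly contains no line — contradiction. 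I expect the main obstacle to be verifying the precise hypotheses of the weighted splitting theorem (in particular that boundedness of $f$, rather than a gradient bound, is what is available and sufficient) and making the parabolicity step rigorous when $m\ge 3$, where polynomial growth of degree $m$ does not by itself give parabolicity — this is exactly why the line assumption, which improves the effective growth, is essential. The final sentence of the statement, that one end follows, is then immediate: a complete manifold with more than one end and nonnegative Bakry--Émery Ricci (bounded $f$) contains a line by the standard Cheeger--Gromoll-type construction.
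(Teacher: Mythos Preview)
Your setup is correct: with $|\mathbf{A}|\le 1$ and $f$ bounded, $\mathrm{Ric}_f\ge 0$ and the Lichnerowicz/Wei--Wylie splitting theorem applies, so a line forces $\Sigma\cong N^{m-1}\times\mathbb{R}$ isometrically with $f$ constant along the $\mathbb{R}$-factor, hence $\mathrm{Hess}(f)(\partial_t,\partial_t)=0$. Deducing ``one end'' from ``no line'' at the end is also fine. But both of your proposed routes to a contradiction have genuine gaps.

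Your Route~2 relies on parabolicity of $\Sigma$ (or of $\Sigma_f$), and you correctly flag that $\mathrm{vol}(B_R^\Sigma)\le CR^m$ does not yield parabolicity when $m\ge 3$. The hope that ``the line assumption improves the effective growth'' is unfounded: splitting off a line does not lower the volume-growth exponent, and a product $\mathbb{R}\times N$ with $N$ parabolic need not be parabolic (e.g.\ $\mathbb{R}\times\mathbb{R}^2=\mathbb{R}^3$). So this route does not close. Your Route~1 needs the maximum of $|x|^2$ to be attained on each slice $N$, but $N$ is only known to be a complete $(m-1)$-manifold whose image in $\mathbb{R}^{m+1}$ is bounded; there is no reason it should be compact or parabolic when $m-1\ge 3$, so the slice maximum principle is not available and the subsequent ``rigidity plus real-analyticity'' sketch cannot get started.

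The idea you are missing is to exploit the Simons-type equation \eqref{Simon} rather than the equation for $|x|^2$. Since $|\mathbf{A}|^2\le 1$, \eqref{Simon} makes $|\mathbf{A}|^2$ a $\Delta_f$-subsolution, and the \emph{strong} maximum principle yields a dichotomy: either $|\mathbf{A}|<1$ everywhere, or $|\mathbf{A}|\equiv 1$. In the first case $\mathrm{Ric}_f>0$ pointwise by \eqref{ric_f}; but on the product the ordinary Ricci vanishes in the $\partial_t$ direction, so $\mathrm{Ric}_f(\partial_t,\partial_t)=\mathrm{Hess}(f)(\partial_t,\partial_t)$, which would have to be simultaneously $>0$ and $=0$ --- contradiction. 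In the second case \eqref{Simon} forces $D\mathbf{A}\equiv 0$, Lawson's classification gives $x(\Sigma)=\mathbb{S}^k_{\sqrt{k}}\times\mathbb{R}^{m-k}$, and boundedness forces $k=m$, contradicting non-compactness. This bypasses any parabolicity or slice-compactness hypothesis entirely.
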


\begin{remark}
\rm{ Applying this result to the universal covering of $\Sigma$, and
using \cite{So-Indiana, WeWa-JDG, Y}, we also get the topological information
collected in Theorem \ref{intro-th_bounded} stated in the Introduction.}
\end{remark}

\begin{proof}
Assume by contradiction that $\Sigma$ contains a line. By assumption
and \eqref{ric_f}, we have that $Ric_{f}\geq0$ with $f$ bounded. Therefore, we can apply the
Cheeger--Gromoll--Lichnerowicz splitting theorem, \cite{L}, and obtain that
$\Sigma$ splits isometrically as the Riemannian product $\left(  N^{m-1}%
\times\mathbb{R},g_{N}+dt\otimes dt\right)  $. Moreover $f$ is constant along the
line. Thus

\begin{equation}
\mathrm{Hess}(f)(\partial_{t},\partial_{t})=0.\label{splitting-hessian}%
\end{equation}
On the other hand, consider the Simons type equation, see e.g. \cite{Hu-JDG, CoMi-Annals2012},
\begin{equation}\label{Simon}
\frac{1}{2}\Delta_{f}\left\vert \mathbf{A}\right\vert ^{2}+\left\vert
\mathbf{A}\right\vert ^{2}\left(  \left\vert \mathbf{A}\right\vert
^{2}-1\right)  =\left\vert D\mathbf{A}\right\vert ^{2}.
\end{equation}
Since, by assumption, $|\mathbf{A}|\leq1$ then the strong maximum principle for the
drifted Laplacian yields that either (a) $|\mathbf{A}|<1$ or (b)
$|\mathbf{A}|\equiv1$, on $\Sigma$. In case (a), recalling (\ref{ric_f}), we
deduce that
\[
\mathrm{Ric}_{f}\left(  \partial_{t},\partial_{t}\right)  =\mathrm{Hess}(f)(\partial_{t},\partial_{t})>0\text{ on }\Sigma,
\]
contradicting (\ref{splitting-hessian}). Suppose that (b) holds, namely,
$|\mathbf{A}|\equiv1$. Using again the Simons equation we get that
$\mathbf{A}$ is parallel. We can therefore apply a classification theorem by
Lawson and deduce that $x(\Sigma)$ is a cylindrical product $\mathbb{S}%
_{\sqrt{k}}^{k}\times\mathbb{R}^{m-k}$ with $k=0,...,m$. Since the
self--shrinker is bounded, we conclude that $\Sigma=\mathbb{S}_{\sqrt{m}}^{m}%
$, contradicting the assumption that $\Sigma$ is not compact.
\end{proof}

\subsection{Bounded self--shrinkers with $\limsup |\mathbf{A}|<1$}
In the two previous results we considered global bounds on the norm of  the second fundamental form. The application of the Feller property for $\Delta_f$ in combination with the maximum principle at infinity enable us to prevent the existence of complete, non--compact, bounded self--shrinkers even in the case a pinching condition on $|\mathbf{A}|$ is required at infinity. Recall that the weighted manifold $\Sigma_f$ is said to be Feller if, for some (hence any) smooth domain $\Omega \subset\subset \Sigma_f$ and $\lambda>0$, the minimal solution $h>0$ of the exterior boundary value problem
\[
\left\{
\begin{array}
[c]{ll}%
\Delta_f h=\lambda h & \text{on }\Sigma\setminus \overline{\Omega}\\
h=1 & \text{on }\partial\Omega
\end{array}
\right.
\]
satisfies $h(x) \to 0$ as $x \to \infty$; see \cite{PS-JFA, BPS-RevMatIb}. In particular we obtain the following
\begin{theorem}\label{BndAtInf}
Let $x:\Sigma^m\to\mathbb{B}_{R_0}^{m+1}(0)\subset\mathbb{R}^{m+1}$ be a complete self--shrinker with $\lim_{R\to\infty}\sup_{\Sigma\setminus B_{R}^{\Sigma}}|A|<1$. Then $\Sigma$ is compact.
\end{theorem}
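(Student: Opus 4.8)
The plan is to argue by contradiction: assume $\Sigma$ is non--compact. Since $x(\Sigma)\subseteq\mathbb{B}^{m+1}_{R_0}(0)$, the weight $f=|x|^2/2$ satisfies $0\le f\le R_0^2/2$; in particular it is bounded. By hypothesis there are a smooth relatively compact domain $\Omega\Subset\Sigma$ and a constant $a<1$ with $|\mathbf{A}|\le a$ on the exterior domain $E=\Sigma\setminus\overline{\Omega}$; set $\varepsilon=1-a^2>0$, so that by \eqref{ric_f} one has $\mathrm{Ric}_f\ge\varepsilon$ on $E$. Since $|\mathbf{A}|$ is continuous and bounded on $\Sigma$, the bound \eqref{ric_f} shows $\mathrm{Ric}_f$ is bounded below on $\Sigma$; recalling $\mathrm{Hess}(f)=g+\langle x^{\bot},\mathbf{A}\rangle$ and $|x|\le R_0$, also $\mathrm{Ric}=\mathrm{Ric}_f-\mathrm{Hess}(f)$ is bounded below. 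Together with $f$ bounded, the weighted Bishop--Gromov inequality yields at most exponential growth for $\mathrm{vol}_f(B_R^\Sigma)$ and $\mathrm{vol}(B_R^\Sigma)$; hence, as in the proof of Proposition \ref{PropBall}, the weak maximum principle at infinity holds on $\Sigma_f$ for $\Delta_f$ and on $\Sigma$ for $\Delta$, and moreover $\Sigma_f$ is Feller by the criteria of \cite{PS-JFA, BPS-RevMatIb}.

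The core step is to prove that $|\mathbf{A}|\to0$ at infinity. On $E$ the Simons--type identity \eqref{Simon} gives
\[
\tfrac12\Delta_f|\mathbf{A}|^2\ \ge\ |\mathbf{A}|^2\bigl(1-|\mathbf{A}|^2\bigr)\ \ge\ \varepsilon\,|\mathbf{A}|^2 ,
\]
so $w:=|\mathbf{A}|^2$ is a non--negative bounded function with $\Delta_f w\ge 2\varepsilon w$ on $E$. Let $h>0$ be the minimal solution of the exterior problem $\Delta_f h=2\varepsilon h$ on $\Sigma\setminus\overline{\Omega}$, $h=1$ on $\partial\Omega$; by the Feller property $h(x)\to0$ as $x\to\infty$. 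Put $C=\sup_{\partial\Omega}w<\infty$. Then $\psi:=w-Ch$ satisfies $\Delta_f\psi\ge 2\varepsilon\psi$ on $E$, $\psi\le0$ on $\partial\Omega$, and $\psi$ is bounded above; the exterior weak maximum principle — which is exactly what the Feller property together with the weak maximum principle at infinity for $\Delta_f$ provide, see \cite{PS-JFA, BPS-RevMatIb} — then forces $\psi\le0$, i.e. $|\mathbf{A}|^2\le Ch$ on $E$. Since $h\to0$, we conclude $|\mathbf{A}|\to0$ at infinity.

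To conclude, note that $|\mathbf{A}|\to0$ gives $\mathrm{Ric}_f\ge 1-|\mathbf{A}|^2\ge 1/2$ outside a (possibly larger) compact set. A complete weighted manifold with $\mathrm{Ric}_f$ bounded below by a positive constant outside a compact set and with bounded weight is compact: a ray escapes every compact set, so its tail lies entirely in the region $\mathrm{Ric}_f\ge1/2$, and the weighted Myers theorem — using that the oscillation of $f$ along the ray is at most $R_0^2/2$ — bounds the length of that tail, contradicting that the ray is infinite. Alternatively, using only tools from the excerpt: $|\mathbf{A}|\to0$ gives $|\mathbf{H}|\to0$, whence by \eqref{LaplNormImm} $\Delta|x|^2=2(m-|\mathbf{H}|^2)\ge m$ outside a compact set $\mathcal{K}$; since $|x|^2$ is bounded with $\sup_\Sigma|x|^2\ge m$ by Proposition \ref{PropBall}, if this supremum is not attained then the weak maximum principle at infinity for $\Delta$ produces a sequence leaving $\mathcal{K}$ along which $|x|^2\to\sup_\Sigma|x|^2$ and $\Delta|x|^2\le0$, contradicting $\Delta|x|^2\ge m$ on $\Sigma\setminus\mathcal{K}$; while if it is attained then $|x|$ has a finite maximum and a touching/strong maximum principle argument in the spirit of Proposition \ref{prop_maxprinc_sphere} forces $\Sigma=\mathbb{S}^m_{\sqrt{m}}$. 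In all cases we contradict non--compactness, so $\Sigma$ is compact.

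I expect the genuine difficulty to be the passage $|\mathbf{A}|\to0$: here one really needs the Feller property (to know that the minimal positive solution of the exterior eigenvalue problem decays at infinity) on top of stochastic completeness for $\Delta_f$ (to validate the comparison at infinity) — a merely qualitative ``bounded subsolution on an exterior domain'' need not decay, so the pinching $\limsup|\mathbf{A}|<1$ cannot be exploited without both ingredients. A secondary delicate point is the borderline situation in the last step where $\sup_\Sigma|x|^2$ is attained while $|\mathbf{H}|$ is not known to be globally $\le\sqrt{m}$; the cleanest way around it is precisely the weighted Myers argument, which never needs to control where the relevant supremum is achieved.
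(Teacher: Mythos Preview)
Your proof is correct and follows the paper's strategy on the two essential ingredients: establishing that $\Sigma_f$ is stochastically complete and Feller (the paper invokes Theorems~7 and~8 of \cite{BPS-RevMatIb} directly from $|\mathbf{A}|\in L^\infty$, $|\nabla f|=|x^T|\le R_0$, and \eqref{ric_f}, while you go through volume growth, but the content is the same), and then deducing $|\mathbf{A}|\to0$ at infinity from the exterior differential inequality $\Delta_f|\mathbf{A}|^2\ge 2\varepsilon|\mathbf{A}|^2$ (you explicitly run the comparison with the minimal exterior solution $h$, which is exactly what is packaged as Theorem~2 of \cite{BPS-RevMatIb}).

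The only genuine difference is the concluding step, and here the paper's argument is shorter and more elementary than either of your alternatives. Once $|\mathbf{A}|\to0$, the explicit formula $\mathrm{Hess}(f)=g-\langle\mathbf{H},\mathbf{A}\rangle$ together with $|\langle\mathbf{H},\mathbf{A}\rangle|\le |x|\,|\mathbf{A}|\le R_0|\mathbf{A}|\to0$ gives, along any ray $\gamma$,
\[
(f\circ\gamma)''(t)=\mathrm{Hess}(f)(\dot\gamma,\dot\gamma)\ge \tfrac{1}{2}\quad\text{for }t\gg1,
\]
and two integrations force $f\circ\gamma\to+\infty$, contradicting $f\le R_0^2/2$. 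This sidesteps both the weighted Myers theorem and your case analysis on whether $\sup_\Sigma|x|^2$ is attained --- the ``attained'' branch of which you rightly flagged as delicate, since Proposition~\ref{prop_maxprinc_sphere} is not directly available without a global bound $|\mathbf{H}|\le\sqrt{m}$.
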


\begin{remark}
\rm{
Suppose that $\Sigma$ is compact. Then  $\Sigma\setminus B_{R}^{\Sigma} = \emptyset$ for $R > \mathrm{diam}(\Sigma)$ and, therefore, $\lim_{R\to\infty}\sup_{\Sigma\setminus B_{R}^{\Sigma}}|A|=-\infty$, proving that the assumption of the theorem is automatically satisfied.
Note also that, from a different perspective, the result states that a complete, non--compact, bounded self--shrinker must satisfy the asymptotic condition
$\lim_{R\to\infty}\sup_{\Sigma\setminus B_{R}^{\Sigma}}|A|\geq 1$.
}
\end{remark}

\begin{proof}
First observe that, since $|\mathbf{A}|\in L^{\infty}(\Sigma)$ and $|\nabla f|=|x^T|\leq |x|<R_{0}$, we know by \eqref{ric_f}, Theorem 7 and Theorem 8 in \cite{BPS-RevMatIb} that $M$ is both stochastically complete and Feller with respect to $\Delta_f$. Furthermore, by \eqref{Simon} and our assumption, we have that $|\mathbf{A}|$ is a bounded nonnegative solution of
\begin{equation}\label{IneqAtInf}
\Delta_f|\mathbf{A}|^2\geq\lambda|\mathbf{A}|^2
\end{equation}
outside a smooth domain $\Omega\subset\subset \Sigma$.
%As we shall see below, $|\mathbf{A}|$ cannot be identically zero at infinity, otherwise we would have a contradiction with the boundedness of the immersion. Therefore, we can assume that $\sup_{\partial \Omega}|\mathbf{A}|>0 $ and a
An application of Theorem 2 in \cite{BPS-RevMatIb} permits to deduce that
\begin{equation}\label{LimInfA}
\ |\mathbf{A}|(x)\to0,\quad\mathrm{as}\quad x\to\infty.
\end{equation}
In the matter of this, note that the proof in \cite{BPS-RevMatIb} actually works for nonnegative solutions at infinity of inequalities of the form \eqref{IneqAtInf}.

On the other hand, using the self-shrinker equation, we compute%
\[
\mathrm{Hess}\left(  f\right)  =g-\left\langle
\mathbf{H},\mathbf{A}\right\rangle .
\]
By \eqref{LimInfA} having fixed any ray $\gamma:[0,+\infty)\rightarrow\Sigma$, we have%
\[
\frac{d^{2}}{dt^{2}}\left(  f\circ\gamma\right)
\left(  t\right)  =\mathrm{Hess}\left(  f\right)
\left(  \dot{\gamma},\dot{\gamma}\right)  \geq \frac{1}{2}\text{, for }t>>1.
\]
It follows by integration that $|x|^{2}\rightarrow
+\infty$ along $\gamma$ and, therefore, $x\left(  \Sigma\right)  $ is
unbounded. Contradiction.
\end{proof}

\subsection{Bounded self--shrinkers with $|\mathbf{A}|\in L^{p\geq m}$}

In the next result we switch from $L^{\infty}$ to $L^p$ conditions on the norm of the second fundamental form. In particular we show that complete bounded self--shrinkers with finite total curvature must be compact.
\begin{theorem}
Let $x:\Sigma^{m}  \rightarrow\mathbb{R}^{m+1}$ be a
complete, bounded self--shrinker satisfying $\left\vert \mathbf{A}\right\vert \in L^{p}\left(  d\mathrm{vol}\right),$ for some $p\geq m$. Then $\Sigma$ is compact.
\end{theorem}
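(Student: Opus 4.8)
\noindent\emph{Proof strategy.} The plan is to reduce the statement, through the same Hessian computation already used in the proof of Theorem~\ref{BndAtInf}, to the fact that the second fundamental form of $\Sigma$ decays at infinity. Fix $o\in\Sigma$ and write $x(\Sigma)\subseteq\overline{\mathbb{B}}^{\,m+1}_{R_{0}}(0)$, so that $f=|x|^{2}/2\le R_{0}^{2}/2$ is bounded and, by the self--shrinker equation, $|\mathbf{H}|=|x^{\bot}|\le R_{0}$ and $|\nabla f|=|x^{T}|\le R_{0}$. Suppose for a moment that
\[
|\mathbf{A}|(q)\longrightarrow 0\qquad\text{as }d_{\Sigma}(o,q)\to+\infty .
\]
If $\Sigma$ were non--compact then, being complete, it would contain a ray $\gamma\colon[0,+\infty)\to\Sigma$ issuing from $o$; recalling that $\mathrm{Hess}(f)=g-\langle\mathbf{H},\mathbf{A}\rangle$ and $|\mathbf{H}|\le R_{0}$, for all $t\gg1$ we would obtain
\[
\frac{d^{2}}{dt^{2}}\big(f\circ\gamma\big)(t)=\mathrm{Hess}(f)(\dot{\gamma},\dot{\gamma})(t)\ge 1-R_{0}\,|\mathbf{A}|(\gamma(t))\ge\frac12 ,
\]
whence $f\circ\gamma\to+\infty$, contradicting the boundedness of $f$. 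Thus everything reduces to establishing the displayed decay of $|\mathbf{A}|$.

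\noindent To obtain the decay I would couple the Simons identity with a Moser iteration. Discarding $|D\mathbf{A}|^{2}\ge0$ in \eqref{Simon}, the nonnegative function $v:=|\mathbf{A}|^{2}$ satisfies
\[
\Delta_{f}v\ge 2v-2v^{2}\ge -2v^{2}\qquad\text{on }\Sigma ,
\]
the drift field $\nabla f=x^{T}$ having length $\le R_{0}$. Moreover, since $x\colon\Sigma\to\mathbb{R}^{m+1}$ is isometrically immersed with $|\mathbf{H}|\le R_{0}$, the Michael--Simon Sobolev inequality yields, for every $\varphi\in C^{\infty}_{c}(\Sigma)$ and (when $m\ge3$) with a constant $C=C(m)$,
\[
\Big(\int_{\Sigma}|\varphi|^{\frac{2m}{m-2}}\,d\mathrm{vol}\Big)^{\frac{m-2}{m}}\le C\int_{\Sigma}\big(|\nabla\varphi|^{2}+R_{0}^{2}\varphi^{2}\big)\,d\mathrm{vol},
\]
a global Sobolev inequality whose constant is independent of the volume; the borderline dimension $m=2$ is dealt with separately, either by the corresponding Sobolev inequality or by invoking Section~\ref{Section A1} once $|\mathbf{A}|$ has been shown to be small.

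\noindent The plan is then to run, uniformly over points $q$ lying far out in $\Sigma$, a Moser iteration for $\Delta_{f}v\ge-2v^{2}$ on a geodesic ball $B^{\Sigma}_{r}(q)$ of a fixed small radius $r=r(m,R_{0})$: testing against truncated powers of $v$ and inserting the outcome into the Sobolev inequality, one expects, in the usual $\varepsilon$--regularity fashion, a pointwise bound of the form
\[
\sup_{B^{\Sigma}_{r/2}(q)}|\mathbf{A}|^{2}\le C\,\Big(\int_{B^{\Sigma}_{r}(q)}|\mathbf{A}|^{p}\,d\mathrm{vol}\Big)^{2/p},
\]
valid whenever its right--hand side is sufficiently small, the exponent $p=m$ being the critical one. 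Since $|\mathbf{A}|\in L^{p}(\Sigma)$ with $p\ge m$, the tails $\int_{\Sigma\setminus B^{\Sigma}_{\rho}(o)}|\mathbf{A}|^{p}\,d\mathrm{vol}$ become arbitrarily small for $\rho\gg1$, so the smallness requirement is met on every such ball with $d_{\Sigma}(o,q)$ large, and one concludes $|\mathbf{A}|(q)\to0$ at infinity. I expect this $\varepsilon$--regularity step to be the main obstacle: the quadratic term produced by $-2v^{2}$ is \textquotedblleft bad\textquotedblright\ and can be reabsorbed on the left only by exploiting the smallness of $\|\,|\mathbf{A}|\,\|_{L^{p}}$ on the end, and it is precisely this mechanism that forces the threshold $p\ge m$. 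With the decay of $|\mathbf{A}|$ in hand, the argument of the first paragraph shows that $\Sigma$ is compact.
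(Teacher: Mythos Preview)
Your proposal is correct and follows essentially the same route as the paper: a Michael--Simon Sobolev inequality (with bounded $|\mathbf{H}|$ contributing only a potential term), the Simons--type differential inequality for $|\mathbf{A}|$, a Moser iteration to get $\sup_{\Sigma\setminus B^{\Sigma}_{R}}|\mathbf{A}|=o(1)$, and then the Hessian/ray argument from Theorem~\ref{BndAtInf}. The paper packages the iteration by citing \cite{PiVe-DGA} and works with the scalar form $\Delta_{f}|\mathbf{A}|+|\mathbf{A}|^{3}\ge0$ rather than your $\Delta_{f}v\ge -2v^{2}$, but these are cosmetic differences.
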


\begin{proof}
By contradiction, suppose that $\Sigma$ is complete and non-compact. To illustrate the argument, let us first consider the case $p=m$. Since $f$ is bounded
and $\left\vert \mathbf{H}\right\vert \in L^{m}\left(  \Sigma\right)  $ it is standard to obtain
that $\Sigma$ enjoys the weighted $L^{2}$-Sobolev inequality%
\[
\left(  \int\varphi^{\frac{2m}{m-2}}d\mathrm{vol}_{f}\right)  ^{\frac{m-2}{m}}\leq
S\int\left\vert \nabla\varphi\right\vert ^{2}d\mathrm{vol}_{f},
\]
for some constant $S>0$ and for every $\varphi\in C_{c}^{\infty}\left(
\Sigma\right)  $. Indeed, first we can absorb the mean curvature term in the Sobolev inequality by J.H. Michael and L.M. Simon, \cite{MiSi-CPAM}, outside a large compact set, then,  according to \cite{Ca-Duke}, we can extend the resulting Sobolev inequality to all of $\Sigma$ and, finally, we note that, since $f$ is bounded,%
\[
c^{-1}d\mathrm{vol}_{f}\leq d\mathrm{vol}\leq c\text{ }d\mathrm{vol}_{f}%
\]
for a large enough constant $c>1$.

Now we recall that the second fundamental form of the self--shrinker satisfies
the Simons-type inequality%
\[
\Delta_{f}\left\vert \mathbf{A}\right\vert +\left\vert \mathbf{A}\right\vert
^{3}\geq0.
\]
Since $\left\vert \mathbf{A}\right\vert \in L^{m}\left(  d\mathrm{vol}_{f}\right)  $,
combining the PDE with the weighted Sobolev inequality gives the Anderson-type
decay estimate%
\begin{equation}
\sup_{\Sigma\backslash B_{R}^{\Sigma}\left(  o\right)  }\left\vert
\mathbf{A}\right\vert =o\left( R^{-1}\right)  \text{, as }%
R\rightarrow+\infty. \label{unifest}
\end{equation}
This follows e.g. by adapting to the weighted setting the arguments in
\cite{PiVe-DGA}.

From this uniform estimate it is now standard to get that the immersion $x$ is
proper, thus contradicting the assumption that $x\left(  \Sigma\right)  $ is a
bounded subset of $\mathbb{R}^{m+1}.$ In fact, we have the following general
result that, in the setting of minimal submanifolds of the Euclidean space,
traces back to a paper by M. Anderson, \cite{An-preprint}; see also Remark \ref{rem_tamed} below.

\begin{lemma}\label{lemma_finitetoptype}
Let $x\colon \left(  \Sigma^{m},g\right)  \rightarrow\mathbb{R}^{m+1}$ be a
complete, non-compact hypersurface satisfying (\ref{unifest}). Then $x$ is
proper and $\Sigma$ has finite topological type, i.e., there exists a smooth
compact subset $\Omega\subset\subset\Sigma$ such that $\Sigma\backslash\Omega$
is diffeomorphic to the half-cylinder $\partial\Omega\times\lbrack0,+\infty)$.
\end{lemma}

As a matter of fact, the uniform decay condition (\ref{unifest}) on the second fundamental form, as well as the corresponding structure Lemma,
are even too much strong for the desired conclusion to hold. This is illustrated in the next reasonings where we assume the general condition $p\geq m$.

Again, by contradiction, suppose that $\Sigma$ is complete and non--compact. Since $f$ is
bounded, by the self--shrinker equation we get $\left\vert \mathbf{H}\right\vert \in L^{\infty}$. Whence, we obtain that $\Sigma$ enjoys the weighted $L^{2}$-Sobolev inequality (with potential term)
\[
\left(  \int_{\Sigma}\varphi^{\frac{2m}{m-2}}d\mathrm{vol}_{f}\right)
^{\frac{m-2}{m}}\leq A\int_{\Sigma}\left\vert \nabla\varphi\right\vert
^{2}d\mathrm{vol}_{f}+B\int_{\Sigma}\varphi^{2}d\mathrm{vol}_{f},
\]
for some constants $A,B>0$ and for every $\varphi\in C_{c}^{\infty}\left(
\Sigma\right)  $. Since $\left\vert \mathbf{A}\right\vert $ is a solution of
the semilinear equation%
\[
\Delta_{f}\left\vert \mathbf{A}\right\vert +\left\vert \mathbf{A}\right\vert
^{3}\geq0,
\]
and $\left\vert \mathbf{A}\right\vert \in L^{p}\left(  d\mathrm{vol}%
_{f}\right)  =L^{p}\left(  d\mathrm{vol}\right)  $ for some $p\geq m$, we
deduce that (see e.g. \cite{PiVe-DGA})%
\begin{equation}
\sup_{\Sigma\backslash B_{R}^{\Sigma}}\left\vert \mathbf{A}\right\vert
=o\left(  1\right)  \text{, as }R\rightarrow+\infty.\label{unifest1}%
\end{equation}
Reasoning exactly as in the last part of the proof of Theorem \ref{BndAtInf} this leads to the fact that $x(\Sigma)$ is unbounded, yielding a contradiction.
\end{proof}

\begin{remark}\label{rem_tamed}
\rm{
The decay assumption (\ref{unifest}) in Lemma \ref{lemma_finitetoptype} can be considerably relaxed. This was established in \cite{BJM-CAG} where the authors used the notion of tamed submanifolds. We are grateful to Pacelli Bessa for having pointed out this fact to us.
}
\end{remark}

\section{Self--shrinkers and hyperplanes through the origin} \label{section halfspace}
\subsection{Self--shrinkers in a half--space}
It is reasonable that a complete self--shrinker has a certain homogeneous distribution around $0\in \mathbb{R}^{m+1}$ and, therefore, it should intersect every hyperplane through the origin. For compact self--shrinkers this property is easily verified. In fact, more is true. It was proved in Theorem 7.3 of \cite{WeWa-JDG} that if the distance between two properly immersed self--shrinkers (either compact or not) is realized, then the self--shrinkers must intersect. In particular, a compact self--shrinker must intersect every hyperplane through the origin, as claimed. Moreover, the intersection must be non-tangential by maximum principle considerations. Summarizing, a compact self--shrinker cannot be contained in one of the half--spaces determined by a hyperplane through the origin. Needless to say, exactly the same proof works for a complete self--shrinker with polynomial volume growth because, according to \cite{ChZh-volume}, it is properly immersed. We are going to recover the same conclusion by using more direct and analytic arguments that are suitable for a generalization to the complete, (non--necessarily proper) setting.

\begin{theorem}
\label{th_halfspace-compact}Let $x\colon \Sigma^{m}\rightarrow\mathbb{R}^{m+1}$ be a
compact self--shrinker. Then, for every hyperplane $\Pi$ through the origin of
$\mathbb{R}^{m+1}$, $x\left(  \Sigma\right)  $ cannot be contained in one of the closed halfspaces determined by $\Pi$.
\end{theorem}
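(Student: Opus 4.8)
The plan is to use the well--known fact that, for a self--shrinker based at the origin, the restriction to $\Sigma$ of an ambient linear function $\ell$ satisfies $\Delta_{f}\ell=-\ell$. Fix a unit vector $a\in\mathbb{R}^{m+1}$ orthogonal to $\Pi$ and set $u=\langle x,a\rangle\colon\Sigma\to\mathbb{R}$. Since $\Delta x=\mathbf{H}$ (recall $\mathbf{H}=\mathrm{tr}_{\Sigma}\mathbf{A}$ and $\mathbf{A}=Ddx$) and $\nabla f=x^{T}$, one computes $\Delta_{f}x=\mathbf{H}-x^{T}$; using the self--shrinker equation in the form $x^{T}=x-x^{\bot}=x+\mathbf{H}$, this becomes $\Delta_{f}x=-x$. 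Pairing with the constant vector $a$ yields the identity
\[
\Delta_{f}u=-u\qquad\text{on }\Sigma .
\]

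Granted this identity, the proof is short. Arguing by contradiction, assume $x(\Sigma)$ is contained in one of the two closed half--spaces bounded by $\Pi$; after possibly replacing $a$ by $-a$ we may assume $u\ge 0$ on $\Sigma$. I would first dispose of the degenerate case $u\equiv 0$, i.e. $x(\Sigma)\subseteq\Pi$: then $x$ is an isometric immersion of the $m$--manifold $\Sigma$ into the $m$--dimensional hyperplane $\Pi$, hence a local diffeomorphism and an open map, so $x(\Sigma)$ is open and closed in the connected set $\Pi$ and therefore equals $\Pi$ --- impossible, since $x(\Sigma)$ is compact and $\Pi$ is not. So we may assume $u\ge 0$ with $u>0$ somewhere. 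Integrating $\Delta_{f}u=-u$ over the closed manifold $\Sigma$ against $d\mathrm{vol}_{f}$, the divergence theorem gives
\[
0=\int_{\Sigma}\Delta_{f}u\,d\mathrm{vol}_{f}=-\int_{\Sigma}u\,d\mathrm{vol}_{f}<0 ,
\]
a contradiction. (Equivalently: $\Delta_{f}u=-u\le 0$ together with $u\ge 0$ on a compact manifold forces $u$ to be constant by the strong minimum principle, hence $u\equiv 0$, which has just been excluded.)

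There is no real obstacle in the compact case: the entire argument rests on the local identity $\Delta_{f}u=-u$ plus one integration by parts that is legitimate precisely because $\Sigma$ is closed. The point of recording it in this form --- rather than via the distance/touching argument of \cite{WeWa-JDG} --- is that the eigenfunction identity persists verbatim in the complete, non--proper setting, so the only step that needs upgrading is the integration/minimum--principle step, which will be replaced by weak maximum principles at infinity and the Feller property under suitable volume--growth or curvature hypotheses, exactly along the lines of the results that follow.
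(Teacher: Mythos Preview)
Your proof is correct and follows essentially the same approach as the paper: both derive the eigenfunction identity $\Delta_{f}L(x)=-L(x)$ for the linear height function, use compactness of $\Sigma$ to force $L(x)\equiv 0$, and then reach a contradiction from $x(\Sigma)=\Pi$. The only cosmetic difference is that the paper applies the maximum principle to the $f$--superharmonic function $L(x)\ge 0$ directly, whereas you integrate against $d\mathrm{vol}_{f}$ (and record the maximum--principle variant parenthetically).
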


\begin{proof}
Recall that, for a self--shrinker,%
\[
\Delta_{f}x=-x.
\]
Therefore, if $\Pi$ has normal equation%
\begin{equation}
\Pi:\text{\quad}L\left(  y\right)  :=\sum_{j=1}^{m+1}a_{j}y^{j}%
=0,\label{hyperplane}%
\end{equation}
we have that the self--shrinker satisfies also%
\begin{equation}
\Delta_{f}L\left(  x\right)  =-L\left(  x\right)  .\label{eq-L(x)}%
\end{equation}
Whence, it follows easily that $x\left(  \Sigma\right)  $ cannot be contained
in one of the closed half-spaces determined by $\Pi$. Indeed, otherwise, we would
have that either $L\left(  x\right)  \geq0$ or $L\left(  x\right)  \leq0$.
Without loss of generality, suppose that $L\left(  x\right)  \geq0$. Then, by
the above equation, $L\left(  x\right)  $ would be an $f$-superharmonic
function on the compact manifold $\Sigma$. By the maximum principle
\ $L\equiv\mathrm{const}$ and by equation (\ref{eq-L(x)}) $L\equiv0$. This
means that $x\left(  \Sigma\right)  \subseteq\Pi$ and, by geodesic
completeness, $x\left(  \Sigma\right)  =\Pi$. This is clearly impossible
because \ $\Sigma$ is compact.
\end{proof}

A similar conclusion can be obtained for complete self--shrinkers  $x\colon \Sigma^{m}\rightarrow\mathbb{R}^{m+1}$
with a controlled extrinsic geometry. By way of example, suppose that
\begin{equation}
\left\vert x \right\vert + \left\vert \mathbf{A}\left(
p\right)  \right\vert \leq \sqrt{1+r\left(  p\right)^2}
,\label{growth x and A}%
\end{equation}
where $r\left(  p\right)  =d_{\Sigma}\left(  p,o\right)  $.
Then, for every hyperplane $\Pi$ through the origin, if $x\left(
\Sigma\right)  $ lies on one side of $\Pi$, then%
\[
\mathrm{dist}_{\mathbb{R}^{m+1}}\left(  \Pi,x\left(  \Sigma\right)  \right)  =0
\]
and the distance is not attained, unless $x\left(  \Sigma\right)  =\Pi$.

Indeed, note that, in light of (\ref{ric_f}),
condition (\ref{growth x and A}) implies
\[
Ric_{f}\geq-C(1+r^2)  \text{,\quad}\left\vert \nabla f\right\vert
=|x^{T}|\leq \sqrt{1+r^2}  .
\]
Then, according to Corollary 5.3 in \cite{PiRiRiSe-Annali}, for every $u\in
C^{2}\left(  \Sigma\right)  $ with $\inf_{\Sigma}u=u_{\ast}>- \infty$ \ there
exists a sequence $\left\{  p_{n}\right\}  \subset\Sigma$ \ along which%
\[
u\left(  p_{n}\right)  <u_{\ast}+\frac{1}{n}\text{,\quad}\left\vert \nabla
u\right\vert \left(  p_{n}\right)  <\frac{1}{n}\text{,\quad}\Delta_{f}u\left(
p_{n}\right)  >-\frac{1}{n}.
\]
Now, as in the compact case, if $x\left(  \Sigma\right)  $ lies on one side of
$\Pi$, we can assume that $L\left(  x\right)  \geq0$ where $L\left(  y\right)
$ is defined in (\ref{hyperplane}). Evaluating (\ref{eq-L(x)}) along $\left\{
p_{n}\right\}  $ we deduce that $\inf_{\Sigma}L\left(  x\right)  =0$, as
desired. The second conclusion is a consequence of the strong
minimum principle for positive super-solutions of $\Delta_{f}+1$.
\medskip

In the next theorem we point out natural geometric conditions that permit to recover the full
conclusion of the compact case.
\begin{theorem}
\label{th_halfspace-complete}
Let $x\colon \Sigma^{m}\rightarrow\mathbb{R}^{m+1}$ be
a complete, non-compact self--shrinker. Assume that either one of the following assumptions is satisfied:
\begin{itemize}
\item[(a)] $\Sigma$ has (extrinsic) polynomial volume growth.
\item[(b)] $\mathrm{vol}_f(B_R^{\Sigma})=O(R^2)$ as $R \to \infty$.
\item[(c)] $\left\vert \mathbf{A}\right\vert^2 \in L^{p} \left(
d\mathrm{vol}_{f}\right)$ and $\left\vert \mathbf{A}\right\vert ^{2}\leq 1+\frac{1}{p}$,
for some $p> 1$.
\end{itemize}
\medskip
Then, for every hyperplane $\Pi$ through the origin, if $x\left(  \Sigma\right)  $ lies on one side of $\Pi$, then $x\left(
\Sigma\right)  =\Pi$.
\end{theorem}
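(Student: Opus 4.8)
The plan is to run, for each of the three hypotheses, the same basic dichotomy already used in Theorem \ref{th_halfspace-compact}: assuming $x(\Sigma)$ lies in a closed half-space determined by $\Pi$, the linear function $u:=L(x)$ is nonnegative and satisfies $\Delta_f u = -u \le 0$, so it is a nonnegative $f$-superharmonic function; the goal in each case is to upgrade this to $u\equiv \mathrm{const}$, then conclude $u\equiv 0$ from $\Delta_f u=-u$, hence $x(\Sigma)\subseteq\Pi$ and, by completeness, $x(\Sigma)=\Pi$. The only thing that changes from case to case is the analytic tool used to force a bounded-below $f$-superharmonic function (or, more precisely, a nonnegative solution of $\Delta_f u \le 0$) to be constant.

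\textbf{Case (a) and (b): a weighted parabolicity / Liouville argument.} Under polynomial extrinsic volume growth we have $\mathrm{vol}_f(B_R^\Sigma)\le\mathrm{vol}_f(\mathbb{B}_R\cap\Sigma)$ of polynomial growth, and in case (b) this is assumed directly with the sharp quadratic rate. I would invoke the weighted analogue of the classical Karp/Yau Liouville theorem: if $\mathrm{vol}_f(B_R^\Sigma)=O(R^2)$ then $\Sigma_f$ is $f$-parabolic, and a nonnegative $f$-superharmonic function on an $f$-parabolic weighted manifold is constant. (In case (a) with polynomial but faster-than-quadratic growth one cannot directly invoke parabolicity, so there I would instead use that $u=L(x)$ has at most polynomial growth — since $|x|$ has polynomial growth along $\Sigma$ — and apply an $L^1$-type Liouville theorem of Karp/Li–Schoen for $\Delta_f$-superharmonic functions of controlled growth on manifolds of polynomial volume growth, using crucially that $\int_{B_R} u\, d\mathrm{vol}_f$ grows polynomially while $\Delta_f u=-u$; integrating this over $B_R$ and using the weighted Green/cutoff identity gives $\int_\Sigma u\, d\mathrm{vol}_f<\infty$, whence $u\in L^1(d\mathrm{vol}_f)$ and $\Delta_f u\le 0$ force $u\equiv 0$ directly.) In both subcases the conclusion $u\equiv 0$ is immediate once the Liouville-type statement is in hand.

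\textbf{Case (c): the Simons equation plus a weighted Sobolev/decay argument, as in Section \ref{section ball}.} Here the pinching $|\mathbf{A}|^2\le 1+\tfrac1p$ together with $|\mathbf{A}|^2\in L^p(d\mathrm{vol}_f)$ is precisely the regime in which the Simons-type inequality \eqref{Simon}, namely $\tfrac12\Delta_f|\mathbf{A}|^2 \ge |\mathbf{A}|^2(1-|\mathbf{A}|^2)\ge -\tfrac1p|\mathbf{A}|^2$, can be combined with a Moser/De Giorgi iteration in the weighted setting (exactly as in the proof of the theorem on $|\mathbf{A}|\in L^{p\ge m}$, or using the $L^p$ Kato–Simons iteration of Pigola–Veronelli) to deduce $\sup_{\Sigma\setminus B_R^\Sigma}|\mathbf{A}|\to 0$ as $R\to\infty$. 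Once $|\mathbf{A}|\to 0$ at infinity, the identity $\mathrm{Hess}(f)=g-\langle\mathbf{H},\mathbf{A}\rangle \ge \tfrac12 g$ outside a compact set shows, integrating along any ray, that $|x|^2\to+\infty$ along every ray — so $x(\Sigma)$ is unbounded — and moreover $\mathrm{Ric}_f\ge 1-|\mathbf{A}|^2\to 1$ at infinity, so $\mathrm{Ric}_f$ is bounded below and $\Sigma_f$ has the weak maximum principle at infinity for $\Delta_f$ (in fact $\mathrm{Ric}_f\ge \tfrac12$ outside a compact set suffices). Evaluating $\Delta_f u=-u\le 0$ along a sequence realizing $\inf_\Sigma u$ then gives $\inf_\Sigma u=0$; and since $u>0$ would contradict the strong minimum principle for supersolutions of $\Delta_f+1$ unless $u\equiv 0$, we again get $x(\Sigma)\subseteq\Pi$, hence $x(\Sigma)=\Pi$.

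\textbf{Main obstacle.} The delicate point is case (c): one must verify that the quantitative pinching constant $1+\tfrac1p$ is exactly what makes the $L^p$ iteration close — i.e. that the "bad" potential coefficient $|\mathbf{A}|^2-1\le \tfrac1p$ can be absorbed against the $L^p$-integrability of $|\mathbf{A}|^2$ in the Sobolev/iteration step. This is where the specific numerology enters, and it requires care that the weighted Michael–Simon Sobolev inequality (valid outside a compact set, then globalized via the boundedness of $f$ and Carron's patching as in the earlier proof) has the correct form and that the iteration exponent stays below the critical Sobolev threshold at every stage. The parabolicity input in (b) and the growth-Liouville input in (a) are, by contrast, essentially off-the-shelf once the volume bound is translated from $d\mathrm{vol}$ to $d\mathrm{vol}_f$, which is immediate since $f=|x|^2/2$ is bounded on any bounded shrinker — though here $\Sigma$ need not be bounded, so in (a) one uses instead that on a properly immersed shrinker $f$ restricted to $B_R^\Sigma$ grows at most like $R^2$, keeping the weighted volume polynomial.
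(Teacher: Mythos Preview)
Your handling of case (b) matches the paper. In case (a), however, you overlook the key simplification: extrinsic polynomial volume growth gives not merely polynomial \emph{weighted} volume growth but finite weighted volume, $\mathrm{vol}_f(\Sigma)<+\infty$ (immediately from $\int_\Sigma e^{-|x|^2/2}\,d\mathrm{vol}\le \sum_k e^{-k^2/2}\mathrm{vol}(\Sigma\cap\mathbb{B}_{k+1}^{m+1})$, a computation of the type in Lemma~\ref{VolGrowths}). Hence $\Sigma_f$ is $f$-parabolic and your Liouville step goes through exactly as in (b); the detour through an $L^1$-Liouville argument with cutoffs is unnecessary.

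Case (c) is where your proposal diverges from the paper and where there are genuine gaps. First, the Moser/De Giorgi iteration you invoke from Section~\ref{section ball} was carried out for \emph{bounded} shrinkers, where $f$ is bounded and $d\mathrm{vol}\sim d\mathrm{vol}_f$, so that the Michael--Simon Sobolev inequality transfers to the weighted setting and can be globalized via Carron; here $\Sigma$ is not assumed bounded, and you yourself flag this without resolving it. Second, the integrability hypothesis in (c) is $|\mathbf{A}|^2\in L^p(d\mathrm{vol}_f)$ for some $p>1$, not $|\mathbf{A}|\in L^{p\ge m}(d\mathrm{vol})$; the iteration in \cite{PiVe-DGA} needs the latter kind of exponent, and the differential inequality you extract, $\Delta_f|\mathbf{A}|^2\ge -\tfrac{2}{p}|\mathbf{A}|^2$, has the wrong sign to feed into a Feller-type decay as in Theorem~\ref{BndAtInf}. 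Third, even granting $|\mathbf{A}|\to 0$ and the weak maximum principle at infinity, your conclusion ``$u>0$ would contradict the strong minimum principle for supersolutions of $\Delta_f+1$'' is not correct: the strong minimum principle only yields a contradiction if the infimum $0$ is \emph{attained}, which the Omori--Yau sequence does not guarantee. (This is exactly why the discussion just before the theorem obtains only $\mathrm{dist}(\Pi,x(\Sigma))=0$ under the weaker growth hypothesis~\eqref{growth x and A}.)

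The paper's argument for (c) is quite different and explains the numerology $|\mathbf{A}|^2\le 1+\tfrac1p$ directly. Assuming $x(\Sigma)\ne\Pi$, the function $L=L(x)$ is strictly positive and solves $\Delta_f L+L=0$; the pinching gives $p(|\mathbf{A}|^2-1)\le 1$, hence $\Delta_f L+p(|\mathbf{A}|^2-1)L\le 0$. This is paired with the Simons inequality $\Delta_f|\mathbf{A}|+(|\mathbf{A}|^2-1)|\mathbf{A}|\ge 0$, and a Liouville-type comparison theorem (Theorem~8 in \cite{Ri-self}), using $|\mathbf{A}|^2\in L^p(d\mathrm{vol}_f)$, forces $|\mathbf{A}|\equiv 0$ or $|\mathbf{A}|\equiv 1$. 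In either case $D\mathbf{A}\equiv 0$ by the Simons equality, so Lawson's classification gives $x(\Sigma)=\mathbb{S}^k_{\sqrt{k}}\times\mathbb{R}^{m-k}$; lying on one side of $\Pi$ then forces $k=0$, contradicting $x(\Sigma)\ne\Pi$. The role of the constant $1+\tfrac1p$ is thus to match the eigenvalue $1$ of $L$ against the potential $|\mathbf{A}|^2-1$ in the comparison, not to close a Sobolev iteration.
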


\begin{proof} We shall use extensively the notation introduced so far. In particular, the hyperplane $\Pi$
is described by the normal equation (\ref{hyperplane}) and the function $L(x)$ satisfies equation (\ref{eq-L(x)}).

Assume we are in the assumptions of \textbf{(a)}. Since $\Sigma$ has polynomial volume growth, then
$\mathrm{vol}_{f}\left(  \Sigma\right)  <+\infty$ \ and $\Sigma_{f}$ is parabolic with
respect to the drifted Laplacian $\Delta_{f}$. Using the above notation,
assume without loss of generality that $L\left(  x\right)  \geq0$. By
equation (\ref{eq-L(x)}) we see that $\ L\left(  x\right)  \geq0$ is
$f$-superharmonic, hence it is constant by $f$--parabolicity. The desired
conclusion now follows as in the proof of Theorem \ref{th_halfspace-compact}. Case \textbf{(b)} is completely similar.
Assume now that the assumptions in \textbf{(c)} are satisfied. Let
$x\left(  \Sigma\right)  \neq\Pi$ and, by contradiction, suppose that
$x\left(  \Sigma\right)  $ is contained in a half-space determined by $\Pi$.
Then, by the strong minimum principle, we can assume that $L\left(  x\right)
>0$ is a solution of%
\[
\Delta_{f}L+L=0.
\]
Since%
\[
p\left(  \left\vert \mathbf{A}\right\vert ^{2}-1\right)  \leq 1,
\]
for some $p>1$, we obtain%
\[
\Delta_{f}L+p\left(  \left\vert \mathbf{A}\right\vert ^{2}-1\right)
L\leq0.
\]
Combining this latter with the Simons--type inequality%
\[
\left\vert \mathbf{A}\right\vert \left\{  \Delta_{f}\left\vert \mathbf{A}%
\right\vert +\left\vert \mathbf{A}\right\vert \left(  \left\vert
\mathbf{A}\right\vert ^{2}-1\right)  \right\}  \geq\left\vert D\mathbf{A}%
\right\vert ^{2}-\left\vert \nabla\left\vert \mathbf{A}\right\vert \right\vert
^{2}\geq0,
\]
and applying Theorem 8 in \cite{Ri-self} we conclude that either $\left\vert
\mathbf{A}\right\vert \equiv1$ or $\left\vert \mathbf{A}\right\vert \equiv0$.
Using this information into the Simons--type equality%
\[
\frac{1}{2}\Delta_{f}\left\vert \mathbf{A}\right\vert ^{2}+\left\vert
\mathbf{A}\right\vert ^{2}\left(  \left\vert \mathbf{A}\right\vert
^{2}-1\right)  =\left\vert D\mathbf{A}\right\vert ^{2}%
\]
gives that $\left\vert D\mathbf{A}\right\vert \equiv0$ and by Lawson
classification theorem $x(  \Sigma)  =\mathbb{S}^{k}_{  \sqrt{k}}
\times\mathbb{R}^{m-k}$, with $0\leq k\leq m$. Since $x\left(  \Sigma\right)
$ must lie on one side of $\Pi$ we necessarily have $k=0$, i.e., $x\left(  \Sigma\right)
=\Pi$, contradiction.
\end{proof}

\subsection{Bottom of the spectrum of the drifted Laplacian}
Once we have understood that, to a certain extent, complete self--shrinkers
intersect transversally a hyperplane through the origin, we are going to deduce
spectral information on the drifted Laplacian whenever the intersection is compact,
and some (extrinsic) volume growth condition is satisfied.

The intuition for the general result contained in Theorem \ref{th_spectral} relies on the following two examples.
Recall that, by definition, the bottom of the spectrum of $-\Delta_f$ on a domain $\Omega \subseteq \Sigma$, with
Dirichlet boundary conditions, is defined by
\[
\lambda_{1}(  -\Delta_{f}^\Omega)=\inf_{v\in C^{\infty}_{c}(\Omega)\setminus\{0\}}\frac{\int_{\Omega}{|\nabla v|^2 d\mathrm{vol}_f}}{\int_{\Omega}v^2 d\mathrm{vol}_f}.
\]
The bottom of the spectrum $\lambda_1$ is an eigenvalue of $-\Delta_f$ if there exists a function $u\in \mathrm{Dom}(-\Delta_{f}^{\Omega})$ such that
\[
-\Delta_f u = \lambda_1 u \text{ on }\Omega,
\]
where
\[
\mathrm{Dom}(-\Delta_{f}^{\Omega})=\{u \in W^{1,2}_0(\Omega,d\mathrm{vol}_f):\Delta_f u \in L^2(\Omega,d\mathrm{vol}_f)\}
\]
is the domain of (the Friedrichs extension of) $-\Delta_{f}$ originally defined on $C^{\infty}_{c}(\Omega)$. For future purposes,
we also recall that if $\partial \Omega$ is compact then,
\[
u\in W^{1,2}(\Omega,d\mathrm{vol}_f) \text{ and }u=0 \text{ on }\partial{\Omega} \Rightarrow { } u \in W^{1,2}_0(\Omega,d\mathrm{vol}_f).
\]
Indeed, the interesting case occurs when $\Omega$ is non--compact, i.e., an exterior domain, in the complete manifold $\Sigma$. Let $0\leq \phi_R \leq 1$ be the standard family of cut--off functions supported in the ball $B^{\Sigma}_{2R}$, satisfying $\phi_{R}=1$ on $B^{\Sigma}_R$ and such that $|\nabla \phi_{R}|\leq 2/R$. Then, $u_R=u\phi_R \in W^{1,2}_{0}(\Omega)$ and it is easy to verify that
$u_{R} \to u$ in $W^{1,2}(\Omega,d\mathrm{vol}_f)$, as $R \to \infty$.
\begin{example}
\rm{Consider the self--shrinker sphere $\mathbb{S}_{\sqrt{m}}^{m}$. Then, each hyperplane
$\Pi$ through the origin divides $\mathbb{S}_{\sqrt{m}}^{m}$ into half--spheres
isometric to $^{+}\mathbb{S}_{\sqrt{m}}^{m}=\mathbb{S}_{\sqrt{m}}^{m}\cap\left\{  y^{m+1}>0\right\}
$. Since $f\left(  x\right)  \equiv m/2$, it holds%
\[
\lambda_{1}(  -\Delta_{f}^{^{+}\mathbb{S}_{\sqrt{m}}^{m}})  =\lambda_{1}(
-\Delta^{^{+}\mathbb{S}_{\sqrt{m}}^{m}})  =\frac{1}{m}\lambda_{1}(
-\Delta^{^{+}\mathbb{S}_{1}^{m}})  =1;
\]
see e.g. \cite{Ch_Eigenvalues}.}
\end{example}

\begin{example}
\rm{Consider the self--shrinker cylinder $\mathcal{C}=\mathbb{S}_{\sqrt{m-1}}%
^{m-1}\times\mathbb{R}$. Then the hyperplane $\Pi=\left\{  y^{m+1}=0\right\}
$ intersects $\mathcal{C}$ along the sphere $\mathbb{S}_{\sqrt{m-1}}%
^{m-1}$ and divides $\mathcal{C}$ into two half--cylinders
isometric to $\mathcal{C}_{+}=\mathbb{S}_{\sqrt{m-1}}^{m-1}\times\mathbb{R}%
_{+}$. These are the ends of $\Sigma$. We claim that%
\[
\lambda_{1}(- \Delta_{f}^{\mathcal{C}_{+}})  =1.
\]
Indeed, since%
\[
f=\frac{|x|^{2}}{2}=\frac{m-1}{2}+\frac{x_{m+1}^{2}}{2},
\]
we have the decomposition%
\[
\Delta_{f}^{\mathcal{C}_{+}}=\Delta^{\mathbb{S}_{\sqrt{m-1}}^{m-1}}%
+\Delta_{t^{2}/2}^{\mathbb{R}_{+}}%
\]
and, therefore,%
\begin{align*}
\lambda_{1}( - \Delta_{f}^{\mathcal{C}_{+}})   &
=\lambda_{1}(-  \Delta^{\mathbb{S}_{\sqrt{m-1}}^{m-1}})  +\lambda_{1}
(-\Delta_{t^{2}/2}^{\mathbb{R}_{+}}) \\
&  =0+\lambda_{1}(-  \Delta_{t^{2}/2}^{\mathbb{R}_{+}})  .
\end{align*}
Now, the Ornstein--Uhlenbeck operator $\Delta_{t^{2}/2}^{\mathbb{R}}$ on
$(  \mathbb{R}_{+},e^{-t^{2}/2}dt)  $ satisfies%
\[
\lambda_{1}(-  \Delta_{t^{2}/2}^{\mathbb{R}_{+}})  =1.
\]
See e.g. the lecture notes \cite{Sj-lectures} for the basic theory and more advanced topics on the Ornstein--Uhlenbeck operator and its semigroup. Indeed, $u\left(  t\right)  =t$ is a smooth, positive function on $\mathbb{R}_{+}$
satisfying%
\begin{equation}\label{OU-eigenvalue}
\Delta_{t^{2}/2}^{\mathbb{R}_{+}}u=u^{\prime\prime}-tu^{\prime}=-u
\end{equation}
so that, by (the weighted version of) Barta's theorem,
\[
\lambda_{1}(  \Delta_{t^{2}/2}^{\mathbb{R}_{+}})  \geq \inf_{\mathbb{R}_{+}} \frac
{-\Delta_{t^{2}/2}^{\mathbb{R}_{+}}u}{u}=1.
\]
On the other hand, $u\in W^{1,2}(
\mathbb{R}_{+},e^{-t^{2}/2}dt)  $, therefore, by (\ref{OU-eigenvalue}), $\Delta_{t^{2}/2}^{\mathbb{R}_{+}}u \in L^2(\mathbb{R}_{+},e^{-t^{2}/2}dt)  $. Furthermore,  $u\left(  0\right)  =0$. It follows that $u \in \mathrm{Dom}(-\Delta_{f}^{\mathbb{R}_{+}})$ is also a Dirichlet
eigenfunction of the Ornstein--Uhlenbeck operator on $\mathbb{R}_{+}$.}
\end{example}

Abstracting from the previous examples we are now ready to state the following general result.

\begin{theorem} \label{th_spectral}
Let $i:\Sigma^{m}\hookrightarrow\mathbb{R}^{m+1}$ be a complete, embedded
self--shrinker. Assume that, for some hyperplane $\Pi\approx\mathbb{R}^{m}$
through the origin, $\Sigma\cap\Pi=K$ is a compact $\left(  m-1\right)
$--dimensional submanifold. Then:

\begin{enumerate}
\item[(a)] for every connected component $\Sigma_{1}$ of $\Sigma\backslash K$
(which is an open submanifold $\Sigma_{1}\subset\Sigma$ with $\partial
\Sigma_{1}\subseteq K$) it holds%
\[
\lambda_{1}(  -\Delta_{f}^{\Sigma_{1}})  \geq1.
\]

\item[(b)] If either $\Sigma$ is compact or $\Sigma$ has only one end, then
there exists a bounded connected component $\Sigma_{2}$ of $\Sigma\backslash
K$ such that%
\[
\lambda_{1}(  -\Delta_{f}^{\Sigma_{2}})  =1.
\]

\item[(c)] If $\Sigma_{3}$ is an end of $\Sigma$ with respect to $K$ with extrinsic volume growth
\begin{equation}\label{ExtrVolSubExpQuadrEnd}
\mathrm{vol}\left(  \Sigma_{3}\cap\mathbb{B}_{R}^{m+1}\right)  =O(
e^{\alpha R^{2}})  \text{, as }R\rightarrow+\infty,
\end{equation}
for some $0\leq\alpha<1/2$, then
\[
\lambda_{1}(  -\Delta_{f}^{\Sigma_{3}})  =1.
\]

\end{enumerate}
\end{theorem}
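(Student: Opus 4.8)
The whole argument hinges on the identity $\Delta_f L(x)=-L(x)$ of \eqref{eq-L(x)}, where $L(y)=\sum_j a_j y^j$ is the (unit) normal form of $\Pi$, so that on $\Sigma$ one has $K=\{L(x)=0\}$. On any connected component $\Sigma_i$ of $\Sigma\setminus K$ the function $L(x)$ is nowhere zero, hence of constant sign; replacing $L$ by $-L$ if necessary we set $u=L(x)>0$ on $\Sigma_i$, and note that $u=0$ on $\partial\Sigma_i\subseteq K$. Two elementary facts will be used repeatedly: $|\nabla u|\le |a|=1$ everywhere on $\Sigma$ (it is the tangential part of a fixed unit vector) and $|u|\le |x|$. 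Part (a) is then immediate from the weighted Barta inequality already used above for the cylinder: on the open set $\Sigma_1$ we have a positive function $u$ with $-\Delta_f u/u\equiv 1$, whence $\lambda_1(-\Delta_f^{\Sigma_1})\ge \inf_{\Sigma_1}(-\Delta_f u/u)=1$.

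For the matching upper bound in (b) and (c) the strategy is always the same: show that on the relevant component $u=\pm L(x)$ belongs to $\mathrm{Dom}(-\Delta_f^{\Sigma_i})$. Once this is known, the integration by parts valid on the domain of the Friedrichs extension gives $\int|\nabla u|^2\,d\mathrm{vol}_f=-\int u\,\Delta_f u\,d\mathrm{vol}_f=\int u^2\,d\mathrm{vol}_f$, so $u$ realizes Rayleigh quotient $1$ and (using the cut-off approximation $u_R=u\phi_R\to u$ recalled before) $\lambda_1(-\Delta_f^{\Sigma_i})\le 1$; combined with (a) this forces equality. Since $\partial\Sigma_i\subseteq K$ is compact and smooth, it suffices for this to check that $u\in W^{1,2}(\Sigma_i,d\mathrm{vol}_f)$, for then $u\in W^{1,2}_0(\Sigma_i,d\mathrm{vol}_f)$ by the quoted boundary fact, while $\Delta_f u=-u\in L^2(\Sigma_i,d\mathrm{vol}_f)$ follows from $|u|\le|x|$ and the volume bounds below.

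For (b) I first note that $L(x)$ must change sign on $\Sigma$: were $L(x)\ge 0$ everywhere, then $u=L(x)$ would be a nonnegative $f$-superharmonic function ($\Delta_f u=-u\le 0$) vanishing at the interior points of the nonempty set $K$, and the strong maximum principle would give $u\equiv 0$, i.e. $\Sigma\subseteq\Pi$, incompatible with $\dim K=m-1$. Hence $\Sigma\setminus K$ has at least one component where $L(x)>0$ and at least one where $L(x)<0$. If $\Sigma$ is compact, every component is relatively compact. If $\Sigma$ has a single end, then at most one component of $\Sigma\setminus K$ can fail to be relatively compact: each non-relatively-compact component $V$ (which has compact boundary inside $K$) contributes, for every compact $C\supseteq K$, at least one unbounded component of $\Sigma\setminus C$, and distinct $V$'s contribute disjoint ones, so two such $V$'s would force at least two ends. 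In either case a relatively compact component $\Sigma_2$ exists; on it $u=\pm L(x)$ is smooth, bounded, with bounded gradient, with finite weighted volume, and vanishes on the compact boundary, so $u\in\mathrm{Dom}(-\Delta_f^{\Sigma_2})$ and the Rayleigh argument closes the case.

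For (c), with $\Sigma_3$ a non-relatively-compact component satisfying \eqref{ExtrVolSubExpQuadrEnd} for some $0\le\alpha<1/2$, it remains only to verify the integrability of $u=\pm L(x)$. Using $|\nabla u|\le 1$ and $|u|\le |x|$, both $\int_{\Sigma_3}|\nabla u|^2\,d\mathrm{vol}_f$ and $\int_{\Sigma_3}u^2\,d\mathrm{vol}_f$ are dominated by $\int_{\Sigma_3}(1+|x|^2)e^{-|x|^2/2}\,d\mathrm{vol}$, which I estimate by decomposing $\Sigma_3$ into the extrinsic annuli $\mathbb{B}^{m+1}_{j+1}\setminus\mathbb{B}^{m+1}_j$: on the $j$-th annulus the weight is at most $e^{-j^2/2}$ while the Euclidean area is $O(e^{\alpha(j+1)^2})$, and since $\alpha<1/2$ the series $\sum_j (1+j^2)\,e^{\alpha(j+1)^2-j^2/2}$ converges. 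Hence $u\in W^{1,2}(\Sigma_3,d\mathrm{vol}_f)$ with $\Delta_f u=-u\in L^2$, the boundary being compact yields $u\in\mathrm{Dom}(-\Delta_f^{\Sigma_3})$, and the Rayleigh argument gives $\lambda_1(-\Delta_f^{\Sigma_3})\le 1$, hence $=1$ by (a). The only genuinely delicate step is the topological one in (b), matching the ends of $\Sigma$ with the non-relatively-compact components of $\Sigma\setminus K$ and keeping track of the compactness and smoothness of the pieces of $K$ that bound them; the spectral inputs (weighted Barta, and the fact that any function in the domain solving $-\Delta_f u=u$ attains the bottom of the spectrum) are routine.
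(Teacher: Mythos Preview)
Your argument is correct and follows the paper's proof essentially line for line: part (a) via the weighted Barta inequality applied to $L(x)$, part (b) by the strong minimum principle forcing $\Sigma\setminus K$ to be disconnected and the one-end hypothesis yielding a bounded component on which $L(x)$ is a genuine Dirichlet eigenfunction, and part (c) by verifying $L(x)\in W^{1,2}(\Sigma_3,d\mathrm{vol}_f)$ through an extrinsic-annulus decomposition (the paper packages this last step as Lemma~\ref{VolGrowths}, using geometric annuli $\mathbb{B}^{m+1}_{t^{n+1}}\setminus\mathbb{B}^{m+1}_{t^n}$ rather than your arithmetic ones, but the estimate is the same). Your slightly more explicit treatment of why at most one component of $\Sigma\setminus K$ can be unbounded when $\Sigma$ has one end is a welcome expansion of what the paper leaves implicit.
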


\begin{remark}
\rm{The conclusion in (a) holds regardless of the fact that the intersection $K$ is compact.}
\end{remark}
\begin{remark}
\rm{Note that condition (\ref{ExtrVolSubExpQuadrEnd}) in (c) is actually equivalent to the (only apparently less general) polynomial volume growth condition.
Indeed, it is easy to see that (\ref{ExtrVolSubExpQuadrEnd}) implies that $\mathrm{vol}_{f}(\Sigma_3 )< +\infty$ (see Lemma \ref{VolGrowths} below)
and minor changes to the proofs of Theorem 2.1 and Theorem 4.1 in \cite{ChZh-volume} show that the equivalences in \cite{ChZh-volume} can be localized to a given end. In particular, under assumption \eqref{ExtrVolSubExpQuadrEnd}, $\Sigma_3$ is proper and of extrinsic polynomial (Euclidean) volume growth.
For the sake of completeness we sketch out here the proof of the fact that a properly immersed end has Euclidean volume growth. The proofs of the remaining implications can be easily adapted from the original ones. Suppose that $\tilde{\Sigma}$ is a properly immersed end of a complete noncompact self-shrinker $x:\Sigma^m\to\mathbb{R}^{m+1}$. To prove that $\tilde{\Sigma}$ must have Euclidean extrinsic volume growth observe that, since $\partial \tilde{\Sigma}$ is compact and properly immersed we can find a regular value $r_0$ such that $\left\{p\in\tilde{\Sigma}:\,|x(p)|=r_0\right\}$ does not intersect $\partial \tilde{\Sigma}$. Then we can define for $r>r_0$ the set $D_r:=\left\{p\in\tilde{\Sigma}:\,r_0<|x(p)|<r\right\}$. Since the immersion is proper, letting $h=\frac{|x|^2}{4}$, we can define for $t>0$, $r>r_0$,
\[
\ I(t)=\frac{1}{t^m}\int_{\overline{D}_r}e^{-\frac{h}{t}}d\rm{vol}.
\]
Since on a self-shrinker
\begin{eqnarray*}
\left|\nabla h\right|^2-h\leq 0\\
\Delta_{h}h+h=m,
\end{eqnarray*}
we gain that, if $t\geq1$,
\begin{equation*}
I^{\prime}(t)\leq-t^{-m-1}\int_{\overline{D}_r}\mathrm{div}\left(e^{-\frac{h}{t}}\nabla h\right).
\end{equation*}
At a regular value $r$ of $|x|$, for $t\geq 1$, by Stokes' Theorem we have thus
\begin{eqnarray*}
I^{\prime}(t)&\leq-t^{-m-1}\Bigg[&\left.\int_{\left\{|x|=r\right\}}\left\langle e^{-\frac{h}{t}}\nabla h, \frac{\nabla h}{|\nabla h|}\right\rangle d\mathrm{vol}\right.\\
&&\left.-\int_{\left\{|x|=r_{0}\right\}}\left\langle e^{-\frac{h}{t}}\nabla h, \frac{\nabla h}{|\nabla h|}\right\rangle d\mathrm{vol}\right]\\
&\leq& t^{-m-1}\int_{\left\{|x|=r_0\right\}}e^{-\frac{h}{t}}|\nabla h|d\mathrm{vol}.
\end{eqnarray*}
Integrating on $[1,r^2]$, with $r^2>r_0^2\geq1$, and elaborating, we get
\begin{equation}\label{euclvolend}
e^{-\frac{1}{4}}r^{-2m}\int_{\overline{D}_r} d\mathrm{vol}\leq \int_{\overline{D}_r}e^{-h}d\mathrm{vol}+\int_{1}^{r^2}t^{-m-1}e^{-\frac{r_0^2}{2t}}dt\int_{\left\{|x|=r_0\right\}}|\nabla h|d\mathrm{vol}.
\end{equation}
Proceeding now as in \cite{ChZh-volume} we can conclude that, for any positive integer $N$, we have
\begin{eqnarray*}
\int_{\overline{D}_{r+N}}e^{-h}d\mathrm{vol}\leq \left[\prod_{i=0}^{N}\frac{1}{1-e^{-(r+i)}}\right]&&\left[\int_{\overline{D}_{r-1}}e^{-h}d\mathrm{vol}\right.\\&&\left.+e^{-r}\int_{1}^{r^2}t^{-m-1}e^{-\frac{r_0^2}{2t}}\frac{r_0}{2}\mathrm{vol}_{m-1}\left(\left\{|x|=r_0\right\}\right)\right].
\end{eqnarray*}
This implies that $\int_{\tilde{\Sigma}}e^{-h}d\mathrm{vol}<+\infty$ and the desired Euclidean extrinsic volume growth of $\tilde{\Sigma}$ follows from (\ref{euclvolend}).
}
\end{remark}

\begin{proof}[Proof (of Theorem \ref{th_spectral})]
Let $\Pi$ be represented by the normal equation%
\[
\Pi:L\left(  y\right)  :=\sum_{j=1}^{m+1}a_{j}y^{j}=0.
\]
Recall that, for every self--shrinker,%
\[
\Delta_{f}x=-x.
\]
It follows that%
\[
\Delta_{f}L\left(  x\right)  +L\left(  x\right)  =0\text{, on }\Sigma.
\]
In particular, this equation holds on $\Sigma_{1}$. Moreover, since
$\Sigma_{1}$ is contained in one of the open halfspaces determined by $\Pi$,
then either $L<0$ or $L>0$ on $\Sigma_{1}$. Thus, up to changing the sign of
$L$, we can assume $L>0$ and using (the weighted version of) Barta's theorem
we deduce%
\[
\lambda_{1}(  -\Delta_{f}^{\Sigma_{1}})  \geq \inf_{\Sigma_1}\frac{-\Delta_{f}L}%
{L}=1.
\]
This proves (a).

Suppose now that $\Sigma$ is non--compact and has only one end. We claim that there
exists a compact connected component $\Sigma_{2}$ of $\Sigma\backslash K$. In
this case, since $L=0$ on $\partial\Sigma_{2}\subseteq K$, we deduce that $L$
is an eigenfunction of $\Delta_{f}^{\Sigma_{2}}$ corresponding to eigenvalue
$+1$. When combined with (a) this clearly implies that $\lambda_{1}%
(-\Delta_{f}^{\Sigma_{2}})=1$, completing the proof of (b). To prove the
claim, we first observe that $\ \Sigma\backslash K$ cannot be connected.
Indeed, by contradiction, suppose the contrary. Then $\Sigma$ must be
contained in one of the closed half--spaces determined by $\Pi$ and intersects
$\Pi$ tangentially along $K$. Without loss of generality, we can assume that
$L\left(  x\right)  \geq0$ on $\Sigma$ and $L\left(  x\right)  =0$ on $K$.
Since $\Delta_{f}L\left(  x\right)  =-L\left(  x\right)  \leq0$ on $\Sigma$,
by the strong minimum principle we get $L\left(  x\right)  \equiv0$ on
$\Sigma$, i.e., $\Sigma\subseteq\Pi$. Actually, $\Sigma=\Pi$ by geodesic
completeness and this clearly prevents $K=\Sigma\cap\Pi$ to be compact,
contradiction. Thus, $\Sigma\backslash K$ has at least two connected
components. Since we are assuming that $\Sigma$ has one end, at most one of
them can be unbounded. We
therefore find a bounded component $\Sigma_{2}\subseteq\Sigma$ of $\Sigma\setminus K$, as claimed.

It remains to prove (c). The argument is completely similar to the above.
According to (a), $\lambda_{1}(-\Delta_{f}^{\Sigma_{3}})\geq1$ and $L\left(
x\right)  \geq0$ is a solution of%
\[
\left\{
\begin{array}
[c]{l}%
\Delta_{f}L\left(  x\right)  +L\left(  x\right)  =0\text{, on }\Sigma_{3}\\
L=0\text{, on }\partial\Sigma_{3}\subseteq K.
\end{array}
\right.
\]
To conclude that, in fact, $\lambda_{1}(-\Delta_{f}^{\Sigma_{3}})=1$ it
suffices to show that $L \in \mathrm{Dom}(\Sigma_3)$. Since  $L=0$ on the compact boundary  $\partial \Sigma_3$, we have to show that $L \in W^{1,2}\left(\Sigma_3, d\mathrm{vol}_{f}\right)  $. To this aim, we simply note that
\[
\frac{\left\vert L\left(  x\right)  \right\vert }{\sqrt{\sum a_{j}^{2}}%
}=\mathrm{dist}_{\mathbb{R}^{m+1}}\left(  x,\Pi\right)  \leq d_{\mathbb{R}%
^{m+1}}\left(  x,0\right)  = |x|,
\]
and
\[
\frac{|\nabla L\left(  x\right)| }{\sqrt{\sum a_{j}^{2}}}
\leq 1.
\]
Therefore, we can apply the next trivial lemma. This proves (c) and completes the proof of the theorem.
\end{proof}

\begin{lemma}\label{VolGrowths}
Let $x\colon \Sigma^{m}\rightarrow\mathbb{R}^{m+1}$ be any hypersurface satisfying%
\[
\mathrm{vol}\left(  \Sigma\cap\mathbb{B}^{m+1}_{R}\right)  =O\left(  e^{\alpha
R^2}\right)  \text{, as }R\rightarrow+\infty,
\]
for some $0\leq\alpha<1/2$. Then, for every polynomial $\mathcal{P}\left(
t\right)  $ and for every $0\leq\beta<1/2-\alpha$,%
\[
\mathcal{P}\left(  \left\vert x\right\vert \right)  e^{\beta\left\vert
x\right\vert ^{2}}\in L^{1}\left(  d\mathrm{vol}_{f}\right)  .
\]

\end{lemma}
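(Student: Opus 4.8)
The plan is to reduce the statement to a one-line summation estimate. First I would rephrase the claimed integrability in terms of the unweighted measure: since $d\mathrm{vol}_f = e^{-|x|^2/2}\,d\mathrm{vol}$, the assertion $\mathcal{P}(|x|)e^{\beta|x|^2}\in L^1(d\mathrm{vol}_f)$ is equivalent to
\[
\int_\Sigma |\mathcal{P}(|x|)|\, e^{-\gamma |x|^2}\, d\mathrm{vol} < +\infty, \qquad \gamma := \tfrac12 - \beta,
\]
and the hypothesis $0\le\beta<1/2-\alpha$ is exactly the condition $\gamma>\alpha\ge 0$. This strict inequality is the whole point.

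Next I would slice $\Sigma$ by the extrinsic distance to the origin. Put $A_k=\{p\in\Sigma : k\le |x(p)|<k+1\}$ for $k\in\mathbb{N}$, so that $\Sigma=\bigcup_{k\ge 0}A_k$ and $A_k\subseteq \Sigma\cap\mathbb{B}^{m+1}_{k+1}$ (understood, if $x$ is merely immersed, as $A_k\subseteq x^{-1}(\mathbb{B}^{m+1}_{k+1})$). The growth assumption then provides a constant $C>0$ with $\mathrm{vol}(A_k)\le \mathrm{vol}(\Sigma\cap\mathbb{B}^{m+1}_{k+1})\le C e^{\alpha(k+1)^2}$; in particular every $A_k$ has finite volume. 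On $A_k$ one has $e^{-\gamma|x|^2}\le e^{-\gamma k^2}$, and if $\mathcal{P}(t)=\sum_j a_j t^j$ then $|\mathcal{P}(|x|)|\le \sum_j |a_j|(k+1)^j =: Q(k)$, a polynomial in $k$ with nonnegative coefficients.

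Finally I would add up the pieces:
\[
\int_\Sigma |\mathcal{P}(|x|)|\, e^{-\gamma|x|^2}\, d\mathrm{vol} \le \sum_{k\ge 0} Q(k)\, e^{-\gamma k^2}\, \mathrm{vol}(A_k) \le C \sum_{k\ge 0} Q(k)\, e^{-(\gamma-\alpha)k^2 + 2\alpha k + \alpha}.
\]
Since $\gamma-\alpha>0$, the exponent is $-(\gamma-\alpha)k^2+O(k)$, which for large $k$ is $\le -\tfrac12(\gamma-\alpha)k^2$ and hence dominates the polynomial factor $Q(k)$; the series therefore converges and the lemma follows.

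There is no genuine obstacle here — the authors rightly call this a trivial lemma. The only minor points to keep track of are (i) reading $\Sigma\cap\mathbb{B}_R$ as $x^{-1}(\mathbb{B}_R)$ when $x$ is only immersed, so that the hypothesis applies to the sets $A_k$, and (ii) controlling the sign of $\mathcal{P}$, which is why one passes to the dominating polynomial $Q$ with nonnegative coefficients. The essential input is simply that a Gaussian weight $e^{-\gamma|x|^2}$ with $\gamma>\alpha$ outpaces the prescribed volume growth $e^{\alpha R^2}$.
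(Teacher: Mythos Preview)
Your argument is correct and follows essentially the same route as the paper: rewrite the weighted integral as $\int_\Sigma |x|^p e^{-(1/2-\beta)|x|^2}\,d\mathrm{vol}$, slice $\Sigma$ into extrinsic annuli, bound the integrand on each piece, and use the volume growth hypothesis to obtain a convergent series. The only cosmetic difference is that the paper slices by geometric annuli $\mathbb{B}_{t^{n+1}}\setminus\mathbb{B}_{t^n}$ with $t>1$ chosen so that $\tfrac12 - t^2\alpha - \beta>0$, whereas you use unit annuli $\{k\le|x|<k+1\}$; either works and the underlying estimate is identical.
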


\begin{proof}
Note that, by assumption, there exists $t>1$ such that%
\[
\frac{1}{2}-t^2 \alpha-\beta>0.
\]
Now, we simply compute%
\begin{align*}
\int_{\Sigma}\left\vert x\right\vert ^{p}e^{\beta\left\vert x\right\vert ^{2}%
}d\mathrm{vol}_{f} &  =\int_{\Sigma}\left\vert x\right\vert ^{p}e^{-\left(
\frac{1}{2}-\beta\right)  \left\vert x\right\vert ^{2}}d\mathrm{vol}\\
&  =C_{1}+C_{2}\sum_{n=0}^{+\infty}\int_{\Sigma\cap\left(  \mathbb{B}%
_{t^{n+1}}^{m+1}\backslash\mathbb{B}_{t^{n}}^{m+1}\right)  }\left\vert
x\right\vert ^{p}e^{-\left(  \frac{1}{2}-\beta\right)  \left\vert x\right\vert
^{2}}d\mathrm{vol}\\
&  \leq C_{1}+C_{2}\sum_{n=0}^{+\infty}t^{pn+p}e^{-\left(  \frac{1}{2}%
-\beta\right)  t^{2n}}\mathrm{vol}\left(  \Sigma\cap\mathbb{B}_{t^{n+1}%
}^{m+1}\right)  \\
&  \leq C_{1}+C_{2}\sum_{n=0}^{+\infty}t^{pn+p}e^{-\left(  \frac{1}{2}%
-t^{2}\alpha-\beta\right)  t^{2n}}\\
&  <+\infty.
\end{align*}

\end{proof}

\bigskip

\end{document}